\newtheorem{theorem}{Theorem}[section]
\newtheorem{lemma}[theorem]{Lemma}
\theoremstyle{definition}
\theoremstyle{remark}
\newtheorem{remark}[theorem]{Remark}
\numberwithin{equation}{section}
\begin{document}

\title{Cobordism of $G$-manifolds}


\author{Jack Carlisle}
\address{Department of Mathematics, the University of Notre Dame, South Bend, Indiana 46616}
\curraddr{}
\email{jcarlisl@nd.edu}
\thanks{}


\subjclass[2020]{Primary 55-02, 55N22, 55P91}

\keywords{Equivariant cobordism}

\date{June 30, 2023}

\begin{abstract}
We survey some results in the field of equivariant cobordism. In particular, we use methods from equivariant stable homotopy theory to calculate the unoriented $C_2$-equivariant bordism ring $\Omega^{C_2}_*$, which was originally calculated by Alexander using other methods. Our proof method generalizes well to other settings, such as equivariant complex cobordism, and affords a formal group theoretic interpretation of Alexander's calculation.
\end{abstract}

\maketitle

\tableofcontents

\section*{Introduction}

Suppose $M_1$ and $M_2$ are smooth, closed manifolds. We say $M_1$ is {\it cobordant} to $M_2$ if there exists a smooth manifold $W$ whose boundary is 
\[\partial W \cong M_1 \sqcup M_2.\]
Equivalence classes of manifolds under this equivalence relation form the unoriented cobordism ring
\[
\Omega_* = \dfrac{\{\text{Smooth closed manifolds}\}}{\text{cobordism}}.
\]
While there are important variants of this construction such as oriented cobordism and complex cobordism, we restrict our attention to the unoriented case. The ring $\Omega_*$ is calculable because it arises as the coefficient ring of a spectrum, namely the Thom spectrum $MO$. As an orthogonal spectrum, the component spaces
\[
(MO)_n = \text{Thom}\left(\gamma \to BO(n) \right)
\] 
of $MO$ are the Thom complexes of universal vector bundles. The Pontrjagin-Thom construction determines a ring map
\[ \begin{tikzcd} 
\Omega_* \ar[r] & MO_*,
\end{tikzcd} \]
which turns out to be an isomorphism. This identification allows one to calculate $\Omega_*$ using techniques from stable homotopy theory. More precisely, Thom \cite{Thom} proved that the unoriented cobordism ring $\Omega_*$ is given by 
\[
\Omega_* = \mathbb{F}_2[x_i : i \neq 2^n -1],
\]
for certain classes $x_i$ of degree $i$.

Suppose now that $G$ is a finite group. We wish to understand cobordism in the $G$-equivariant setting. The appropriate analogue of $\Omega_*$ is the unoriented $G$-cobordism ring 
\[
\Omega^G_* = \dfrac{\{\text{Smooth closed } G\text{ - manifolds}\}}{G\text{ - cobordism}}.
\]
In order to calculate $\Omega^G_*$, we hope to represent this ring by a $G$-spectrum. There is a natural $G$-equivariant analogue of $MO$, which denote $MO_G$. As an orthogonal $G$-spectrum, the component spaces 
\[
(MO_G)_n = \text{Thom}\left( \gamma_{G,n}\to BO_G(n) \right)
\]
of $MO_G$ are the Thom complexes of universal $G$-equivariant vector bundles. The $G$-equivariant Pontrjagin-Thom construction (which we explicate in section 2) determines a ring map 
\[ \begin{tikzcd} 
\Omega^G_* \ar[r] &  MO^G_*,
\end{tikzcd} \]
but if $G$ is a non-trivial group, then this map is {\it not} an isomorphism. This is related to the fact that transversality is not a generic property in the $G$-equivariant setting \cite{Wasserman}.

While the unoriented $G$-cobordism ring is not represented by the expected $G$-spectrum $MO_G$, it {\it is} represented by another $G$-spectrum, which we suggestively denote $\Omega_G$.\footnote{Some other sources use the notation $mO_G$ for the spectrum $\Omega_G$} As an orthogonal $G$-spectrum, the component spaces 
\[
(\Omega_G)_{n} = \text{Thom} \left( \gamma_n \to BO(n) \right)
\]
of $\Omega_G$ are the Thom complexes of universal (non-equivariant) vector bundles. While $G$ acts trivially on $(\Omega_G)_n$ for each $n \geq 0$, the component spaces $(\Omega_G)_V$ for $G$ a non-trivial $G$-representation carry a non-trivial $G$-action, so $\Omega_G$ is not a trivial $G$-spectrum. The component space $(\Omega_G)_V$ is the Thom complex of a certain Tautological vector bundle $\gamma_V$ which is, in a precise sense, intermediate between the universal vector bundle over $BO(n)$, and the universal $G$-equivariant vector bundle over $BO_G(n)$. 

The identification of $\Omega^G_*$ with the coefficients of a $G$-spectrum allows one to calculate $\Omega^G_*$ using techniques from equivariant stable homotopy theory. We will use such techniques to calculate the $C_2$-cobordism ring $\Omega^{C_2}_*$, where $C_2$ is the group of order $2$ (see Theorem \ref{zgraded}). The ring $\Omega^{C_2}_*$ was originally calculated by Alexander \cite{Alexander} using other methods. Our proof method is of independent interest, and generalizes well to other settings, such as equivariant complex cobordism \cite{Jack}. In addition to our calculation of $\Omega^{C_2}_*$, we calculate a presentation of the extended cobordism ring $\Omega^{C_2}_\diamond$ (see Theorem \ref{goodrange}), of which $\Omega^{C_2}_*$ is a subring, and of which  $MO^{C_2}_*$ is a localization. This provides algebraic insight into a theorem of Brocker and Hook \cite{BrockerHook}.

\section{Representing cobordism of $G$-manifolds}

Suppose $G$ is a finite group. Let $\Omega^G_*$ denote the unoriented $G$-cobordism ring. Elements of $\Omega^G_*$ are represented by smooth compact $G$-manifolds, and two such manifolds $M_1$ and $M_2$ represent the same class in $\Omega^G_*$ if there is a $G$-manifold $W$ whose boundary is 
\[
\partial W \cong M_1 \sqcup M_2.
\]
The addition in $\Omega^G_*$ is determined by 
\[
[M_1] + [M_2] = [M_1 \sqcup M_2],
\]
and the product in $\Omega^G_*$ is determined by 
\[
[M_1][M_2] = [M_1 \times M_2],
\]
where $G$ acts diagonally on $M_1 \times M_2$. 

The bordism rings $\Omega^H_*$ for $H \leq G$ assemble to form a $G$-Mackey functor.  In particular, this means that for any subgroup inclusion $H \leq K$, we have restriction and transfer maps
\[ \begin{tikzcd} \Omega^K_* \ar[r,"\text{res}^K_H"] &  \Omega^H_* ,
\end{tikzcd} \] 
\[ \begin{tikzcd} \Omega^H_* \ar[r,"\text{tr}^K_H"] &  \Omega^K_*.
\end{tikzcd} \] 
The restriction $\text{res}^K_H$ send the class of a $K$-manifold $M$ to the class of its underlying $H$-manifold. The transfer $\text{tr}_H^K$ sends the class of a $H$-manifold $M$ to the class of the induced $K$-manifold $K \times_H M$. We write $\underline{\Omega}^G_*$ when we wish to emphasize the Mackey functor structure of geometric cobordism.

Next, we define the $G$-spectrum $\Omega_G$ which represents geometric cobordism. We use orthogonal $G$-spectra as our model of spectra, and refer the interested reader to \cite{HHR}, section 2.2.2, for background on orthogonal $G$-spectra. If $V$ is an orthogonal $G$-representation, let  $BO(V) = \text{Gr}_{|V|}(V \oplus \mathbb{R}^\infty)$ denote the Grassmannian of $|V|$-dimensional subspaces of $V \oplus \mathbb{R}^\infty$, where $C_2$ acts trivially on $\mathbb{R}^\infty$. This $G$-space carries a tautological $G$-equivariant vector bundle $\gamma_V$, and as an orthogonal $G$-spectrum, we define $\Omega_G$ by
\[
(\Omega_G)_V =  \text{Thom}\left( \gamma_V  \to BO(V) \right).
\]
In order to define our ring homomorphism $\Omega^G_* \to \pi^G_*(\Omega_G)$, we review the $G$-equivariant Pontrjagin-Thom construction. 

Suppose we are given a class $[M] \in \Omega^G_n$ which is represented by an $n$-dimensional $G$-manifold $M$. Choose a $G$-equivariant embedding of $M$ into an orthogonal $G$-representation of the form $ \mathbb{R}^n \oplus V$. Let $S^{n + V}$ denote the one-point compactification of $ \mathbb{R}^n \oplus V$. By \cite{Wasserman}, we may choose a $G$-equivariant tubular neighborhood $N$ of $M$ in $\mathbb{R}^n \oplus V$, so that the quotient 
\[S^{ n + V} / (S^{n + V} \setminus N) \cong \text{Thom}(\nu \to M)\]
is $G$-equivariantly homeomorphic to the Thom complex of the normal bundle $\nu$ of $M$ in $\mathbb{R}^n \oplus V$. The normal bundle $\nu$ is equipped with a bundle map to the tautological vector bundle over $\text{Gr}_{|V|}(V \oplus \mathbb{R}^n)$, which itself maps to the tautological vector bundle over $BO(V)$. The corresponding composite 
\[ \begin{tikzcd} 
S^{V + n} \ar[r] & \dfrac{S^{V +n}}{S^{V+n} \setminus N} \ar[r,"\cong"] &  \text{Thom}(\nu \to M) \ar[d] \\
 & &   \text{Thom}\left(\gamma_V \to BO(V) \right) \ar[d,"="]\\
 &  &(\Omega_G)_V
\end{tikzcd} \]
represents a class in $\pi_n^G(\Omega_G)$.

Having explicated the equivariant Pontryagin-Thom construction, we may now state the following theorem, which identifies the coefficient ring of the $G$-spectrum $\Omega_G$ with the (geometrically defined) $G$-cobordism ring $\Omega^G_*$.

\begin{theorem} (\cite{Schwede2}, Theorem 6.2.33)
For any finite group $G$, the assignment 
\[
[M] \mapsto [S^{n + V} \to (\Omega_G)_V],
\]
as defined above, determines a ring isomorphism 
\[\begin{tikzcd}
\Omega^G_* \ar[r,"\cong"] & \pi^G_*(\Omega_G).
\end{tikzcd} \] 
In fact, this holds true at the level of $H$-fixed points for any $H \leq G$, so the construction above determines an isomorphism of $G$-Mackey functors 
\[\begin{tikzcd}  \underline{\Omega}^G_* \ar[r,"\cong"] & \underline{\pi}_*(\Omega_G). 
\end{tikzcd} \]
\end{theorem}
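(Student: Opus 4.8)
The plan is to prove the theorem by exhibiting an explicit geometric inverse to the Pontryagin--Thom map $PT\colon \Omega^G_* \to \pi^G_*(\Omega_G)$ and then checking compatibility with the Mackey structure. The first task is to verify that $PT$ is a well-defined homomorphism of graded rings. A $G$-cobordism $W$ from $M_0$ to $M_1$ embeds $G$-equivariantly in $(\mathbb{R}^n\oplus V)\times[0,1]$ extending given embeddings of the boundary pieces, and the associated collapse construction produces a based $G$-homotopy $S^{n+V}\wedge[0,1]_+ \to (\Omega_G)_V$; hence $PT$ descends to cobordism classes. That $PT$ sends disjoint union to sum and Cartesian product to product follows from the fact that $\Omega_G$ is a commutative ring $G$-spectrum whose multiplication is induced by direct sum of subspaces $\mathrm{Gr}_{|V|}(V\oplus\mathbb{R}^\infty)\times\mathrm{Gr}_{|W|}(W\oplus\mathbb{R}^\infty)\to\mathrm{Gr}_{|V|+|W|}(V\oplus W\oplus\mathbb{R}^\infty)$, together with the standard tubular-neighbourhood bookkeeping for products of collapse maps.

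Next I would construct the inverse. Given a class in $\pi^G_n(\Omega_G)$, represent it by a based $G$-map $f\colon S^{n+V}\to(\Omega_G)_V=\mathrm{Thom}(\gamma_V\to\mathrm{Gr}_{|V|}(V\oplus\mathbb{R}^\infty))$ for a suitable $G$-representation $V$. After a based homotopy one may assume $f$ is smooth away from the preimage of the basepoint, and one then deforms $f$ to be $G$-transverse to the zero section $\mathrm{Gr}_{|V|}(V\oplus\mathbb{R}^\infty)$. The transverse preimage $M:=f^{-1}(\mathrm{Gr}_{|V|}(V\oplus\mathbb{R}^\infty))$ is a smooth closed $G$-manifold of dimension $n$, embedded in $\mathbb{R}^n\oplus V\cong S^{n+V}\setminus\{*\}$ with normal bundle $f^*\gamma_V$; set $\Psi([f])=[M]\in\Omega^G_n$. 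One then checks that $\Psi$ is independent of the choices --- a homotopy between representatives, enlarging $V$, and two transverse approximations all yield manifolds related by explicit $G$-cobordisms --- and that $\Psi\circ PT=\mathrm{id}$ and $PT\circ\Psi=\mathrm{id}$ by unwinding the two constructions. Finally, naturality for the restriction maps $\mathrm{res}^K_H$ is immediate from the definitions, while compatibility with the transfer $\mathrm{tr}^K_H$ reduces to identifying the collapse map of the induced manifold $K\times_H M$ with the spectrum-level transfer applied to the collapse map of $M$, a direct diagram chase with the relevant Thom spaces. This upgrades $PT$ to an isomorphism of $G$-Mackey functors $\underline{\Omega}^G_*\to\underline{\pi}_*(\Omega_G)$.

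The crux --- and the reason the representing spectrum is $\Omega_G$ rather than the naive equivariant Thom spectrum $MO_G$ --- is the transversality step. Equivariant transversality is \emph{not} a generic condition (\cite{Wasserman}), which is precisely why $PT\colon\Omega^G_*\to MO^G_*$ fails to be an isomorphism. The point is that the base $\mathrm{Gr}_{|V|}(V\oplus\mathbb{R}^\infty)$ used to build $\Omega_G$ --- a Grassmannian of $|V|$-planes inside $V$ together with an \emph{infinite trivial} summand --- classifies only those $G$-vector bundles that arise as normal bundles of $G$-embeddings into representations of the form $\mathbb{R}^n\oplus V$, and for such bounded-isotropy normal data the local obstructions to achieving transversality can always be absorbed into the trivial directions. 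I expect the careful verification of this transversality statement, using Wasserman's equivariant tubular-neighbourhood and transversality technology, to be the main technical obstacle; once it is in place, the remaining verifications (independence of choices, the ring structure, and Mackey compatibility) are formal.
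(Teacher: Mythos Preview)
The paper does not actually prove this theorem: it is stated with a citation to Schwede's book (\cite{Schwede2}, Theorem~6.2.33) and no argument is given beyond the description of the Pontryagin--Thom map itself. So there is no in-paper proof to compare your proposal against.

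That said, your outline is the standard strategy and is essentially correct. The construction of $PT$, the inverse via transverse preimage of the zero section (after passing to a finite Grassmannian $\mathrm{Gr}_{|V|}(V\oplus\mathbb{R}^k)$ by compactness), and the Mackey-functor compatibility are all as one expects. You correctly isolate the crux: equivariant transversality is available here precisely because of the shape of $\Omega_G$. One small sharpening of your explanation would help. The phrase ``bounded-isotropy normal data'' is not quite the right diagnosis; the point is representation-theoretic. At any $H$-fixed point of the zero section, the fiber of the tautological bundle $\gamma_V$ is an $H$-invariant $|V|$-plane $W\subset V\oplus\mathbb{R}^k$, and since $G$ acts trivially on $\mathbb{R}^k$ the nontrivial $H$-isotypic pieces of $W$ are forced to sit inside $V$. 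Hence $W$ is an $H$-subrepresentation of $\mathbb{R}^n\oplus V=T_x S^{n+V}$, so there is no representation-theoretic obstruction to making the differential surjective onto the normal direction. This is exactly the hypothesis needed to invoke Wasserman-type equivariant transversality, and it fails for $MO_G$ because there the ambient Grassmannian is built from a complete $G$-universe, allowing normal fibers that are not subrepresentations of the source tangent space.
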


\section{The Tate square for $\Omega^{C_2}_*$}

In this section we begin our calculation of $\Omega^{C_2}_*$. Since, we've identified the $C_2$-cobordism ring with the coefficients of the $C_2$-spectrum $\Omega_{C_2}$, we may use techniques from equivariant stable homotopy to calculate $\Omega^{C_2}_*$. For background on equivariant stable homotopy theory, see \cite{LMS}. Our primary tool is the Tate square associated to a $C_2$-spectrum $E_{C_2}$, which, at the level of coefficients, has the form 
\[ \begin{tikzcd} 
E^{C_2}_* \ar[r] \ar[d] &E^{\Phi C_2}_* \ar[d] \\
E^{hC_2}_* \ar[r] & E^{tC_2}_*.
\end{tikzcd} \]
Here,  $E^{C_2}_*$, $E^{hC_2}_*$, $E^{\Phi C_2}_*$, and $E^{tC_2}_*$ are called the categorical, homotopy, geometric, and Tate fixed points of $E_{C_2}$, respectively. Note that for $E_{C_2} = \Omega_{C_2}$, we have identified the categorical fixed points $\Omega^{C_2}_*$ of $\Omega_{C_2}$ with the geometrically defined cobordism ring of $C_2$-manifolds, so our notation is consistent.

We refer the interested reader to \cite{GreenleesMay2} for a more expert understanding of the Tate square. For our purposes, it suffices to understand that the other flavors of fixed points in this diagram are often more easily computable than $E^{C_2}_*$, and that if the map
\[
E^{hC_2}_* \oplus E^{\Phi C_2}_* \to E^{tC_2}_*
\]
is surjective, then this square is a pullback (see \cite{Kriz} Lemma 2.1). When $E_{C_2} = \Omega_{C_2}$, we will see by direct computation that this condition is satisfied, so that the Tate square
\[ \begin{tikzcd} 
\Omega^{C_2}_* \ar[r] \ar[d] & \Omega^{\Phi C_2}_* \ar[d] \\
\Omega^{hC_2}_* \ar[r] & \Omega^{tC_2}_*,
\end{tikzcd} \]
is a pullback of rings. We begin our calculation by identifying the homotopy, geometric, and Tate fixed points of $\Omega_{C_2}$.

\begin{itemize}
\item
{\bf Homotopy fixed points:} If $E_{C_2}$ is a $C_2$-spectrum, then the homotopy fixed points of $E_{C_2}$ are defined by 
\[
E^{hC_2}_* = \pi_*^{C_2}(F(EC_{2+},E_{C_2}))
\]
where $F(EC_{2+},E_{C_2})$ denotes the spectrum of maps from the free, non-equivariantly contractible $C_2$-space $EC_2$ to the spectrum $E_{C_2}$. The skeletal filtration of the $C_2$-space $EC_2$ determines a spectral sequence which converges to $E^{hC_2}_*$, called the {\it homotopy fixed point spectral sequence}. If $E_{C_2}$ is a split\footnote{ We say a $C_2$-spectrum $E_{C_2}$ is split if there is a map  of non-equivariant spectra $(E_{C_2})^e \to (E_{C_2})^{C_2}$ such that the composite $(E_{C_2})^e \to (E_{C_2})^{C_2} \to (E_{C_2})^e$
 is homotopic to the identity, Here, $(E_{C_2})^e$ is the non-equivariant spectrum underlying $E_{C_2}$, and $(E_{C_2})^{C_2}$ is the fixed point spectrum of $E_{C_2}$.} $C_2$-spectrum,  then the homotopy fixed point spectral sequence collapses, which results in an isomorphism
 \[
 E^{hC_2}_* \cong H^{-*}(C_2, E_*)
 \]
between the homotopy fixed points of $E_{C_2}$, and the group cohomology of $C_2$ with coefficients in the (trivial) $C_2$-module $E_*$. The $C_2$-spectrum $ \Omega_{C_2}$ is known to be split \cite{GreenleesMay2}, which leads to the identification
\begin{align*}
\Omega^{hC_2}_* & \cong H^{-*}(C_2 ; \Omega_*) \\
 & = \Omega_{*}[[e]],
 \end{align*}
 where $|e| = -1$. The element $e \in \Omega^{hC_2}_*$ is called the {\it Euler class} associated to the sign representation $\sigma$ of $C_2$, and plays an important role in the theory of {\it equivariant formal group laws} as defined in \cite{CGK1}, and in equivariant stable homotopy theory more generally.
 
  \item {\bf Tate fixed points:}
 The Tate fixed points may be obtained from the homotopy fixed points by inverting the Euler class $e$, which gives
 \[
 \Omega^{tC_2}_* = \Omega_*((e)),
 \]
 the ring of Laurent series in $e$ with finitely many negative powers of $e$.

\item {\bf Geometric fixed points:} The geometric fixed points of $\Omega_{C_2}$, which were originally calculated by tom Dieck \cite{tomDieck}, are given by
\[
\Omega^{\Phi C_2}_* = \bigoplus_{k=0}^\infty \Omega_{*-k}(BO(k)).\]
Elements of this ring are cobordism classes of pairs $(F,\xi)$ of a manifold $F$ equipped with a vector bundle $\xi \to F$. The ring $\Omega^{\Phi C_2}_*$ has a simple algebraic description. The $\Omega$ homology of $BO(k)$ is known to be
 \[
\Omega_*(BO(k)) = \Omega_*\{ \beta_{i_1}\dots\beta_{i_k}  : 0 \leq  i_1 \leq \dots \leq i_k\},
 \]
where
\[ \beta_{i_1} \dots \beta_{i_k} = [\mathbb{R}P^{i_1} \times \dots \times \mathbb{R}P^{i_k} , \gamma \times \cdots \times \gamma],\]
and where $\gamma \to \mathbb{R}P^n$ denotes the tautological vector bundle (see, for instance \cite{Milnor}). If we set
 \[
 d_i = \beta_i =  [\mathbb{R}P^{i}, \gamma] \in \Omega_{i}(BO(1)),
 \]
 then 
 \begin{align*}
 \Omega^{\Phi C_2}_* & \cong \bigoplus_{k \geq 0} \Omega_{*-k}(BO(k))\\
 & = \Omega_*[d_0, d_1, d_2, \dots].
 \end{align*}
 \end{itemize}
 
We've identified the geometric, homotopy and Tate fixed points of $\Omega^{C_2}_*$, and our next step is to identify the maps in this diagram. We begin with the maps whose target is $\Omega_*((e))$. As we mentioned before, the map $\Omega_*[[e]] \to \Omega_*((e))$ is localization at $e$, so we turn our attention to the map $\Omega_*[d_0,d_1,\dots] \to \Omega_*((e))$. In order to describe this map, we review the necessary background on formal group laws.

Recall that a formal group law over a commutative ring $A$ is a power series $F(y,z) \in A[[y,z]]$ satisfying an associativity, unitality, and commutativity axiom. If $A$ is an $\mathbb{F}_2$-algebra and the formal group law $F$ satisfies $F(x,x) = 0$, we say $F$ is a $[2]${\it -torsion formal group law} over $A$. These arise in homotopy theory in the following way. Consider the map
\[
\otimes : \mathbb{R}P^\infty \times \mathbb{R}P^\infty \to \mathbb{R}P^\infty
\]
which classifies the tensor product of line bundles. If $E$ is an $MO$-algebra spectrum, then $E^*(\mathbb{R}P^\infty)$ is equal to $E_*[[x]]$, $E^*(\mathbb{R}P^\infty \times \mathbb{R}P^\infty)$ is equal to $E_*[[y,z]]$,  and the image of $x$ under $E^*(\otimes)$ is a $[2]$-torsion formal group law $F(y,z) \in E_*[[y,z]]$ over $E_*$. In particular, this construction yields a $[2]$-torsion formal group law
 \[
 F_{MO}(y,z) = \sum_{i,j \geq 0} a_{i,j} y^i z^j
 \]
 over $MO_*$, and Quillen \cite{Quillen} proved that this is in fact the universal $[2]$-torsion formal group law. Having made the necessary definitions, we may now identify the map $\Omega^{\Phi C_2}_* \to \Omega^{tC_2}_*$.

\begin{lemma}
The composite 
\[ \begin{tikzcd} \phi \colon \Omega_*[d_0,d_1,d_2,\dots] \ar[r,"\cong"] & \Omega^{\Phi C_2}_* \ar[r] & \Omega^{tC_2}_* \ar[r,"\cong"] &   \Omega((e))
\end{tikzcd} 
\]
is given by 
\[
\phi(d_i) = \sum_{j\in \mathbb{Z}} c_{i,j} e^j = \text{coeff}\left(x^j, \frac{1}{F_{MO}(e,x)} \right),
\]
where 
\[
\dfrac{1}{F_{MO}(e,y)} = \sum_{i \geq 0} \sum_{j \in \mathbb{Z}} c_{i,j}y^ie^j \in \Omega_*((e))[[y]]
\]
is the multiplicative inverse of the universal $[2]$-torsion formal group law $F_{MO}(e,y) = \sum a_{i,j} e^i y^j$.
\end{lemma}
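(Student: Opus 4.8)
The plan is to reduce the statement to the computation of a single generating series and then to recognise the answer via the $MO$-orientation and its formal group law.

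First, note that $\phi$ is a homomorphism of $\Omega_*$-algebras: it is a composite of ring maps, the two outer ones being the $\Omega_*$-algebra isomorphisms of Section~2 identifying geometric and Tate fixed points, and the middle one $\Omega^{\Phi C_2}_* \to \Omega^{tC_2}_*$ being a ring map from the Tate square. Since $\Omega^{\Phi C_2}_* = \Omega_*[d_0,d_1,d_2,\dots]$ is a polynomial $\Omega_*$-algebra, $\phi$ is determined by the images $\phi(d_i)$, hence by the single series
\[
D(y) \;:=\; \sum_{i\geq 0} \phi(d_i)\, y^i \;\in\; \Omega_*((e))[[y]],
\]
and it suffices to show $D(y) = 1/F_{MO}(e,y)$. (Finiteness of the polar parts is automatic: $\phi(d_i)$ lives in $\Omega^{tC_2}_{i+1}=\Omega_{i+1}((e))$ and $\Omega_*$ is bounded below.)

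Next I would make the comparison map $\Omega^{\Phi C_2}_* \to \Omega^{tC_2}_*$ explicit on the summand $\Omega_*(BO(1)) = \Omega_*(\mathbb{R}P^\infty)$ containing the $d_i$. Unwinding the Tate square, this map is induced by the Borel-completion map $\Omega_{C_2}\to F(EC_{2+},\Omega_{C_2})$ after inverting the Euler class $e = a_\sigma$ of the sign representation; concretely, $\Phi^{C_2}$ of a Borel-complete $C_2$-spectrum is its Tate construction. Under tom Dieck's splitting $\Omega^{\Phi C_2}_* = \bigoplus_k \Omega_{*-k}(BO(k))$, the $k=1$ summand consists of bordism data $(F,L)$ of a manifold with a line bundle, viewed as the normal bundle of a $C_2$-fixed set on which the group acts by $-1$, i.e.\ as $L\otimes\mathbb{R}_\sigma$. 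Applying a localization argument for the Tate/Borel construction, the image of such a datum in $\Omega^{tC_2}_*$ is obtained by inverting the Borel-equivariant Euler class of $L\otimes\mathbb{R}_\sigma$ and pushing forward along $F\to\mathrm{pt}$. Taking the universal datum $(\mathbb{R}P^\infty,\gamma)$, whose associated bordism classes are exactly the $d_i = [\mathbb{R}P^i,\gamma]$, expresses $D(y)$ through the inverse of the Borel-equivariant Euler class of $\gamma\otimes\mathbb{R}_\sigma$ over $\mathbb{R}P^\infty$.

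The formal group law now enters through the orientation on $\Omega_{C_2}$. As a Thom spectrum, $\Omega_{C_2}$ carries Thom, hence Euler, classes for all $C_2$-vector bundles, and its underlying nonequivariant spectrum is $MO$, with associated $[2]$-torsion formal group law Quillen's universal one, $F_{MO}$. Over $\mathbb{R}P^\infty\times BC_2$ the bundle underlying $\gamma\otimes\mathbb{R}_\sigma$ is the external tensor product of the tautological line bundle on $\mathbb{R}P^\infty$ with that on $BC_2$, so its $MO$-Euler class is $F_{MO}(x,e)$ with $x = e_{MO}(\gamma)$ and $e = e_{MO}(\gamma_{BC_2}) = a_\sigma$. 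Inverting $F_{MO}(x,e)$ --- a unit once $e$ is inverted --- and reading off the coefficients against the $d_i$ identifies $D(y)$ with $1/F_{MO}(e,y)$, which is the claim. Equivalently, one checks $D(y)\cdot F_{MO}(e,y) = 1$ by comparing coefficients of $y^n$, recursively pinning down $\phi(d_n)$ (starting from $\phi(d_0) = e^{-1}$, since $F_{MO}(e,0) = e$).

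The main obstacle is the identification used in the middle step: that the geometric generators $d_i = [\mathbb{R}P^i,\gamma]$ --- which form the bordism basis of $\Omega_*(\mathbb{R}P^\infty)$, as opposed to the basis dual to the powers $x^i$ of the orientation class --- combine with the inverse Euler class to yield exactly the $y^i$-coefficient of $1/F_{MO}(e,y)$. This encodes a compatibility between the group-like generating series $\sum_i d_i y^i \in \Omega_*(\mathbb{R}P^\infty)[[y]]$, the comultiplication on $\Omega_*(\mathbb{R}P^\infty)$ coming from tensor product of line bundles, and $F_{MO}$. I would establish it either by a direct computation with the cell structure of $\mathbb{R}P^\infty$ and the behaviour of Gysin maps for the tautological line bundle, or, more efficiently, by invoking Quillen's universality of $F_{MO}$ among $[2]$-torsion formal group laws to reduce to a single universal case.
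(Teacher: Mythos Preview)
Your outline is conceptually on the right track, but the gap you flag in your final paragraph is real and is precisely the substance of the lemma; you do not close it, and neither of your proposed fixes is fleshed out enough to count.

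The issue is the passage from the cohomological expression $1/F_{MO}(x,e)$, where $x$ is the orientation class in $MO^*(\mathbb{R}P^\infty)$, to the generating series $D(y)=\sum_i \phi(d_i)y^i$ in the homology variable $y$. You are right to worry: the classes $d_i=[\mathbb{R}P^i,\gamma]$ are \emph{not} the dual basis to $\{x^i\}$; the pairing matrix $\langle x^j,[\mathbb{R}P^i]\rangle = [\mathbb{R}P^{i-j}]\in\Omega_{i-j}$ is only lower-triangular with $1$'s on the diagonal. So ``reading off coefficients'' does not literally give what you want, and some genuine identity is needed to reconcile the two bases. Your suggestion to invoke Quillen universality is circular here---the statement you need \emph{is} essentially Quillen's identification in this context---and the ``direct computation with cell structure and Gysin maps'' is exactly what remains to be done.

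The paper takes a different and more efficient route that sidesteps the duality issue. It factors $\phi$ through the stabilisation $\bigoplus_k\Omega_{*-k}(BO(k))\to\bigoplus_k\Omega_{*-k}(BO)$ followed by the antipode $\iota_*$ on the Hopf algebra $\Omega_*(BO)$ (classifying $-\xi$ in place of $\xi$), and then cites the unoriented analogue of a formula of Kriz: the map $\Omega_*[e^{\pm 1},\beta_1,\beta_2,\dots]\to\Omega_*((e))$ sends $e(1+\sum\beta_iy^i)\mapsto F_{MO}(e,y)$, where $\beta_i=[\mathbb{R}P^i,\gamma]$. Since the antipode interchanges $\beta_i$ with $\bar\beta_i=[\mathbb{R}P^i,-\gamma]$ and $(1+\sum\beta_iy^i)(1+\sum\bar\beta_iy^i)=1$, one gets $e^{-1}(1+\sum\bar\beta_iy^i)\mapsto 1/F_{MO}(e,y)$; finally $\iota_*(d_i)=e^{-1}\bar\beta_i$ because the normal bundle of $\mathbb{R}P^i$ in $\mathbb{R}P^{i+1}$ is $\gamma$. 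The Kriz formula is the nontrivial input that encodes the basis compatibility you were missing, and the antipode trick converts $F_{MO}$ to $1/F_{MO}$ without ever writing down an inverse Euler class.
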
 

\begin{proof}
Write 
\[
\Omega_*(BO) = \Omega_*[\beta_1,\beta_2,\dots]
\]
where $\beta_i = [\mathbb{R}P^{i}, \gamma]$. 
The map $\phi$ factors as the composite 
\[
\bigoplus_{k \geq 0} \Omega_{*-k}(BO(k)) \to \bigoplus_{k \in \mathbb{Z}} \Omega_{*-k}(BO) \to \bigoplus_{k \in \mathbb{Z}} \Omega_{*-k}(BO)
\]
where the first map is induced by the inclusions $BO(k) \to BO$, and the second map is induced by the endomorphism $\iota:BO \to BO$ which classifies inverses of stable vector bundles. We know that the map 
\[
\bigoplus_{k \in \mathbb{Z}} \Omega_{*-k}(BO) = \Omega_*[e^{\pm 1}, \beta_1 , \beta_2 ,\dots] \to \Omega_*((e))
\]
is defined so that the induced map 
\[
\Omega_*[e^{\pm 1}, \beta_1,\beta_2, \dots ] [[y]] \to \Omega_*((e))[[y]]
\] 
satisfies 
\[
e(1+ \beta_1y+\beta_2y^2 + \dots) \mapsto F_{MO}(e,y),
\]
where $F_{MO}$ is the universal $2$-torsion formal group law. This is the unoriented analogue of a result proved by Kriz in the complex case (see \cite{Kriz}). Set 
\[
\bar{\beta}_i = [\mathbb{R}P^{i} , - \gamma ],
\]
so that the antipode on the Hopf algebra $\Omega_*(BO)$ is given by $\beta_i \mapsto \bar{\beta}_i$. It follows that the map
\[
\Omega_*[e^{\pm 1}, \beta_1,\beta_2, \dots ] [[y]] \to \Omega_*((e))[[y]]
\]
satisfies 
\[
e^{-1}(1 + \bar{\beta}_1y+ \bar{\beta}_2y^2 + \dots)  \cdots \mapsto \dfrac{1}{F_{MO}(e,y)}.
\]
We conclude by observing that $\nu \mid_{\mathbb{R}P^i}^{\mathbb{R}P^{i+1}} = \gamma$, so that $\iota_*(d_i) = e^{-1} \bar{\beta}_i$, and so our map satisfies 
\[
d_0 + d_1y+ d_2y^2 + \dots \mapsto  \dfrac{1}{F_{MO}(e,y)}.
\]

\end{proof}

Next, we need to identify the maps whose domain is $\Omega^{C_2}_*$. We will give a geometric description of each of these. We start with the geometric fixed points $\Omega^{\Phi C_2}_*$, whose elements are cobordism classes of pairs $(F,\xi)$ of a (non-equivariant) manifold $F$ equipped with a vector bundle $\xi \to F$. In these terms, the map 
\[ \begin{tikzcd} 
\Omega^{C_2}_* \ar[r] &  \Omega^{\Phi C_2}_*
\end{tikzcd} \] 
is given by the formula
\[ \begin{tikzcd} 
\left[M\right] \ar[r,mapsto] & \left[M^{C_2} , \nu \mid_{M^{C_2}}^M\right]
\end{tikzcd} \]
 where $M^{C_2}$ is the fixed point submanifold of the $C_2$-manifold $M$, and where $\nu \mid_{M^{C_2}}^M$ is the normal bundle of $M^{C_2}$ in $M$. The map $\Omega^{C_2}_* \to \Omega_*[[e]]$ may be described in terms of the {\it Conner-Floyd operation} $\Gamma$, which we review now (see \cite{connerfloyd} for more details). Let $S(1 +\sigma)$ denote the unit circle in the regular representation $\mathbb{R}^{1+\sigma}$ of $C_2$. If $M$ is a $C_2$-manifold, we define $\Gamma M$ to be the balanced product
 \[
 \Gamma M = M \times_{C_2} S(1+\sigma).
 \]
If we define $\Gamma^n = \Gamma \circ \cdots \circ \Gamma$, then the map from $\Omega^{C_2}_*$ to $\Omega^{hC_2}_*$ is given by 
\[ \begin{tikzcd}
\Omega^{C_2}_* \ar[r] & \Omega_*[[e]]\\
\left[M\right] \ar[r,mapsto] & \sum_{ n \geq 0} \left[ (\Gamma^n M)^e\right] e^n.
\end{tikzcd} \] 
where we write $M^e$ for the underlying non-equivariant manifold of $M$.\footnote{Note that there are two different instances of the symbol $e$ in the expression above. On one hand, we write $e$ for the trivial group, so that the underlying class of a $C_2$-manifold $M$ is $[M^e]$. On the other hand, we write $e$ for the euler class $e \in \Omega^{hC_2}_{-1}$. The meaning of each instance of the symbol $e$ should be clear from context.} This is the unoriented analogue of \cite{Hanke}, Theorem 6.3, and may be verified analogously.

\section{Generators and relations for $\Omega^{C_2}_*$}

In this section, we will complete our calculation of $\Omega^{C_2}_*$ using the Tate square 
 \[ \begin{tikzcd} 
 \Omega^{C_2}_* \ar[r] \ar[d] & \Omega_*[d_0, d_1, d_2,\dots] \ar[d,"\phi"] \\
 \Omega_*[[e]] \ar[r] & \Omega_*((e)).
 \end{tikzcd} \]
Our description of $\Omega^{C_2}_*$ is stated in Theorem \ref{zgraded}. Our proof strategy is as follows. Since $\phi$ maps $d_0$ to the unit $e \in \Omega_*((e))$, we may factor the map $\phi$ as
\[\begin{tikzcd} 
\Omega_*[d_0, d_1,d_2, \dots] \ar[r,"\iota"]  &  \Omega_*[d_0^{\pm 1} , d_1,d_2, \dots] \ar[r, "\psi"]& \Omega_{*}((e))
\end{tikzcd} \]
where $\iota$ is localization at $d_0$. The Tate square for $\Omega^{C_2}_*$ then also factors as 
 \begin{equation} \begin{tikzcd} \label{diagram}
 \Omega^{C_2}_* \ar[r] \ar[d] & \Omega_*[d_0, d_1, d_2,\dots] \ar[d,"\iota"] \\
R \ar[r] \ar[d] & \Omega_*[d_0^{\pm1}, d_1, d_2,\dots] \ar[d,"\psi"] \\
 \Omega_*[[e]] \ar[r] & \Omega_*((e)),
 \end{tikzcd} \end{equation}
 where both squares are pullbacks. We will first calculate the pullback $R$, then use this information to calculate $\Omega^{C_2}_*$. 
 
 \begin{remark} It is worth mentioning that the bottom square in (\ref{diagram}) is isomorphic to the Tate square for the stable cobordism spectrum $MO_{C_2}$. In particular, the ring $R$ is isomorphic to the stable cobordism ring $MO^{C_2}_*$. The relationship between geometric and homotopy theoretic cobordism has been investigated by Sinha \cite{Sinha} in the unoriented, $C_2$-equivariant case, and by Hanke \cite{Hanke} in the complex, torus-equivariant case. 
 
 \end{remark}

\begin{lemma} The square
\[
\begin{tikzcd}
 & d_{i,j} \ar[r,mapsto] & d_ie^{-j} - \sum_{\ell < j} c_{i,\ell} e^{\ell - j} \\
d_{i,j} \ar[d,mapsto] & \dfrac{\Omega_*[e,d_{i,j}: i-1,j \geq 0]}{d_{i,j} - c_{i,j} - ed_{i,j+1}}  \ar[r] \ar[d] & \Omega_*[e^{\pm 1},d_1,d_2,\dots] \ar[d,"\psi"] \\
\sum_{\ell\geq 0} c_{i,j+\ell} e^\ell & \Omega_*[[e]] \ar[r] & \Omega_*((e))
\end{tikzcd} 
\]
is a pullback of rings.
\end{lemma}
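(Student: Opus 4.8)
The plan is to reduce the lemma to showing that a single natural comparison map is an isomorphism, and to prove this by inverting $e$ together with an $\Omega_*[e]$-flatness argument. Since $\Omega_*\cong\mathbb{F}_2[x_i:i\neq 2^n-1]$ is a polynomial ring, the rings $\Omega_*[[e]]$, $\Omega_*((e))$ and $\Omega_*[e^{\pm1},d_1,d_2,\dots]$ are integral domains and the bottom map $\Omega_*[[e]]\hookrightarrow\Omega_*((e))$ (localization at $e$) is injective. Hence the pullback of the bottom--right cospan is simply the subring $R:=\psi^{-1}(\Omega_*[[e]])\subseteq\Omega_*[e^{\pm1},d_1,d_2,\dots]$, and the content of the lemma is to identify this subring with $S:=\Omega_*[e,d_{i,j}:i\geq1,j\geq0]/(d_{i,j}-c_{i,j}-ed_{i,j+1})$ compatibly with the displayed maps.

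Write $\delta_{i,j}:=d_ie^{-j}-\sum_{\ell<j}c_{i,\ell}e^{\ell-j}\in\Omega_*[e^{\pm1},d_1,d_2,\dots]$; this is a genuine element since, by the previous lemma, $\psi(d_i)=\sum_\ell c_{i,\ell}e^\ell$ is a Laurent series in $e$ with only finitely many negative terms. Two direct computations give $e\,\delta_{i,j+1}=\delta_{i,j}-c_{i,j}$ and $\psi(\delta_{i,j})=\sum_{m\geq0}c_{i,j+m}e^m\in\Omega_*[[e]]$. The first identity shows that $e\mapsto e$, $d_{i,j}\mapsto\delta_{i,j}$ defines a ring homomorphism $\Phi\colon S\to\Omega_*[e^{\pm1},d_1,d_2,\dots]$; the second shows that $\Phi$ factors through $R$ and that, together with the map $d_{i,j}\mapsto\sum_m c_{i,j+m}e^m$ to $\Omega_*[[e]]$, the displayed square commutes. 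So one obtains a map $\bar\Phi\colon S\to R$, and it remains to prove it is bijective.

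For surjectivity I would show that $R$ is generated as an $\Omega_*[e]$-algebra by the $\delta_{i,j}$. Given $f\in R$, expand $f=\sum_\mu q_\mu(e)\,\mu$ over monomials $\mu=d_{i_1}\cdots d_{i_k}$ in the $d_i$ with $q_\mu\in\Omega_*[e^{\pm1}]$, and induct on the largest $k$ (the "$d$-length") occurring. Since $\delta_{i_1,j}\delta_{i_2,0}\cdots\delta_{i_k,0}=e^{-j}d_{i_1}\cdots d_{i_k}+(\text{terms of smaller }d\text{-length})$, one can subtract from $f$ an $\Omega_*[e]$-linear combination of products of the $\delta_{i,j}$ that cancels the top-$d$-length part of $f$; the difference again lies in $R$ and has strictly smaller $d$-length, so induction applies, the base case being $\Omega_*[[e]]\cap\Omega_*[e^{\pm1}]=\Omega_*[e]$. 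For injectivity, after inverting $e$ the relations of $S$ solve as $d_{i,j+1}=e^{-1}(d_{i,j}-c_{i,j})$, so $S[e^{-1}]\cong\Omega_*[e^{\pm1},d_{i,0}:i\geq1]$; meanwhile $R[e^{-1}]=\Omega_*[e^{\pm1},d_1,d_2,\dots]$ (any $g$ there has $e^Ng\in R$ for $N\gg0$), and $\bar\Phi[e^{-1}]$ becomes the invertible change of variables $d_{i,0}\mapsto d_i-\sum_{\ell<0}c_{i,\ell}e^\ell$. Hence $\ker\bar\Phi$ is $e$-power torsion, and it suffices to know that $S$ is $e$-torsion free.

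This last point is the crux. I would prove it by exhibiting $S$ as $\Omega_*[e]$-flat: the defining relations split over $i$, so $S\cong S^{(1)}\otimes_{\Omega_*[e]}S^{(2)}\otimes_{\Omega_*[e]}\cdots$ with $S^{(i)}:=\Omega_*[e,d_{i,j}:j\geq0]/(d_{i,j}-c_{i,j}-ed_{i,j+1})$, and each $S^{(i)}$ is the filtered colimit over $N$ of its subalgebras $\Omega_*[e,d_{i,0},\dots,d_{i,N}]/(d_{i,j}-c_{i,j}-ed_{i,j+1}\ :\ 0\leq j<N)$. Triangular elimination of $d_{i,0},\dots,d_{i,N-1}$ identifies the $N$-th of these with the free $\Omega_*[e]$-algebra $\Omega_*[e,d_{i,N}]$, and the transition map sends $d_{i,N}\mapsto c_{i,N}+e\,d_{i,N+1}$, which is injective between these polynomial rings. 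A filtered colimit of flat modules is flat, and a tensor product of flat modules is flat, so $S$ is flat, in particular $e$-torsion free, over $\Omega_*[e]$. Therefore $\ker\bar\Phi=0$, so $\bar\Phi\colon S\to R$ is an isomorphism and the square is a pullback of rings.
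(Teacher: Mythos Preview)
Your proof is correct, and its overall logic is sound: identify the pullback concretely as $R=\psi^{-1}(\Omega_*[[e]])$, build the comparison map $\bar\Phi\colon S\to R$, and prove it is bijective. The surjectivity induction on $d$-length works as written, and the injectivity step (isomorphism after inverting $e$, plus $e$-torsion freeness of $S$) is valid. Your flatness argument for $e$-torsion freeness is fine, though more than is needed.

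The paper proceeds differently. Rather than computing $R$ directly, it invokes Strickland's pullback criterion: for this type of square it suffices that (i) $S\to\Omega_*[e^{\pm1},d_1,d_2,\dots]$ becomes an isomorphism after inverting $e$, (ii) $S\to\Omega_*[[e]]$ becomes an isomorphism after $e$-adic completion, and (iii) $e$ is regular in $S$. Items (i) and (ii) are dispatched in one line; item (iii) is handled by writing $S=\varinjlim_k Q_k$ with $Q_k=\Omega_*[e,d_{i,k}:i\geq1]$, which is essentially the same colimit-of-polynomial-rings idea you use (you split over $i$ first and then take the colimit in $j$, but the content is identical). So the two proofs share the one nontrivial ingredient, namely $e$-regularity of $S$, but package the rest differently: the paper's route is shorter and more conceptual, leaning on an external criterion, while yours is self-contained and yields an explicit lifting procedure (your $d$-length induction) for producing preimages in $S$ of elements of the pullback.
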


\begin{proof}
Following Strickland \cite{Strickland}, it suffices to show that the map 
\[ \begin{tikzcd} 
\Omega_*[e,d_{i,j} ] / (d_{i,j} - c_{i,j} - ed_{i,j+1})  \ar[r] &  
\Omega_*[e^{\pm 1} , d_i]
\end{tikzcd} \]
is an isomorphism after inverting $e$, that 
\[ \begin{tikzcd} 
\Omega_*[e, d_{i,j} ]/ (d_{i,j} - c_{i,j} - ed_{i,j+1}) \ar[r] &  \Omega_*[[e]] 
\end{tikzcd} \]
is an isomorphism after completing at $e$, and that 
\[e \in \Omega_*[e,d_{i,j} ]/(d_{i,j} - c_{i,j} - ed_{i,j+1})\]
is a regular element. The first two assertions are easily verified, and the third follows from the fact that we may write 
\[
\Omega_*[d_{i,j},e]/(d_{i,j} - c_{i,j} - ed_{i,j+1}) = \varinjlim Q_k
\]
where $Q_k = \Omega_*[e,d_{i,k} : i \geq 1]$, and clearly $e$ is a regular element of each of the rings $Q_k$. 
\end{proof}

Having calculated $R$ using the bottom pullback square in \ref{diagram}, we may now use the pullback square
\[ \begin{tikzcd} 
\Omega^{C_2}_* \ar[r] \ar[d] & \Omega_*[d_0 , d_1,d_2,\dots] \ar[d] \\
\Omega_*[e,d_{i,j}]/(d_{i,j} - c_{i,j} - ed_{i,j+1}) \ar[r] & \Omega_*[d_0^{\pm 1} , d_1 ,d_2, \dots ].
\end{tikzcd} \] 
to calculate $\Omega^{C_2}_*$. 

\begin{theorem}\label{zgradedpart} The $C_2$-equivariant geometric bordism ring is given by 
\[
\begin{tikzcd} 
 \Omega^{C_2}_* \cong \dfrac{\Omega_*[d_{i,j} : i - 1, j \geq 0]}{(d_{i,j} - c_{i,j})d_{k,\ell+1} = d_{i,j+1} (d_{k,\ell}  -c_{k,\ell}) }.
 \end{tikzcd} 
 \]
 where $c_{i,j} \in \Omega_{i+j+1}$ is the coefficient of $ e^j y^i$ in $\dfrac{1}{F_{MO}(e,y)}$.
 \end{theorem}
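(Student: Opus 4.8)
The theorem is a computation of the pullback in the top square of~(\ref{diagram}), in which $R=\Omega_*[e,d_{i,j}:i\ge1,\ j\ge0]/(d_{i,j}-c_{i,j}-ed_{i,j+1})$ by the previous lemma. First I would pin down the two maps into the bottom-right corner. Unitality of the formal group law gives $F_{MO}(e,0)=e$, hence $\phi(d_0)=1/F_{MO}(e,0)=e^{-1}$; this identifies $\Omega_*[e^{\pm1},d_1,d_2,\dots]$ with $\Omega_*[d_0^{\pm1},d_1,d_2,\dots]$ via $d_0\leftrightarrow e^{-1}$, turns the right-hand vertical map into localization at $d_0$, and makes the bottom map $\lambda$ send $e\mapsto d_0^{-1}$ and $d_{i,j}\mapsto u_{i,j}:=d_id_0^{\,j}-\sum_{\ell<j}c_{i,\ell}d_0^{\,j-\ell}$. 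Since $\Omega_*[d_0,d_1,\dots]$ is a domain, the right-hand map is injective, so $\Omega^{C_2}_*$ is identified with the subring $S=\{r\in R:\lambda(r)\in\Omega_*[d_0,d_1,\dots]\}$ of elements of $R$ whose image involves no negative power of $d_0$. It is also convenient to observe that $\lambda$ itself is injective: writing $R=\varinjlim_k Q_k$ with $Q_k=\Omega_*[e,d_{1,k},d_{2,k},\dots]$ as in the previous proof, each $\lambda|_{Q_k}$ is injective---it sends $e$ to $d_0^{-1}$ and the polynomial generators $d_{i,k}$ to the elements $u_{i,k}=d_0^{\,k}d_i-(\text{element of }\Omega_*[d_0])$, which (as $d_0^{\,k}$ is a unit) are algebraically independent over $\Omega_*[d_0^{\pm1}]$---and a filtered colimit of injections is injective. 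So we may regard $R=\lambda(R)=\Omega_*[d_0^{-1},u_{i,j}:i\ge1,\ j\ge0]$ and $S=\lambda(R)\cap\Omega_*[d_0,d_1,\dots]$ as subrings of $\Omega_*[d_0^{\pm1},d_1,d_2,\dots]$.

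Let $B=\Omega_*[u_{i,j}:i\ge1,\ j\ge0]$ be the $\Omega_*$-subalgebra generated by the $u_{i,j}$; clearly $B\subseteq S$, and the heart of the proof is the reverse inclusion. Let $v(f)$ denote minus the lowest power of $d_0$ occurring in $0\ne f\in\Omega_*[d_0^{\pm1},d_1,d_2,\dots]$, so that $v(fg)=v(f)+v(g)$, $v(f+g)\le\max(v(f),v(g))$, and $\Omega_*[d_0,d_1,\dots]=\{f:v(f)\le0\}\cup\{0\}$. Let $\pi\colon B\to\Omega_*[d_1,d_2,\dots]$ send $b$ to its coefficient of $d_0^{\,0}$; this is a ring homomorphism because $B\subseteq\Omega_*[d_0,d_1,\dots]$, it agrees with the quotient map killing all $u_{i,j}$ with $j\ge1$ (note $u_{i,0}=d_i$ while $v(u_{i,j})\le-1$ for $j\ge1$), so $\ker\pi=(u_{i,j}:j\ge1)B$, and $v(b)\le0$ for $b\in B$ with equality iff $\pi(b)\ne0$. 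Now take $f\in S$ and write $f=\sum_{m=0}^{M}d_0^{-m}b_m$ with $b_m\in B$, possible since $f\in\lambda(R)=\Omega_*[d_0^{-1},u_{i,j}]$. If $M\ge1$, comparing coefficients of $d_0^{-M}$ forces $\pi(b_M)=0$ (otherwise $f$ would have a nonzero $d_0^{-M}$-term, contradicting $v(f)\le0$), so $b_M\in(u_{i,j}:j\ge1)B$; since $d_0^{-1}u_{i,j}=u_{i,j-1}-c_{i,j-1}\in B$ for $j\ge1$, this gives $d_0^{-1}b_M\in B$, and $f=\sum_{m=0}^{M-2}d_0^{-m}b_m+d_0^{-(M-1)}(b_{M-1}+d_0^{-1}b_M)$ is a shorter such expansion. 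Iterating, $f=b_0\in B$, so $S=B$.

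Finally, write $A$ for the ring in the statement. Since $u_{i,t+1}=d_0(u_{i,t}-c_{i,t})$, both $(u_{i,j}-c_{i,j})u_{k,\ell+1}$ and $u_{i,j+1}(u_{k,\ell}-c_{k,\ell})$ equal $d_0(u_{i,j}-c_{i,j})(u_{k,\ell}-c_{k,\ell})$, so the defining relations of $A$ hold among the $u_{i,j}$ and there is a surjection $\theta\colon A\to B$ with $d_{i,j}\mapsto u_{i,j}$. For injectivity I would first use the relation to show $d_{i,j}d_{k,\ell}\equiv d_{i,j-1}d_{k,\ell+1}$ modulo monomials of strictly smaller total degree in the $d_{i,j}$ (for $j\ge1$); hence the total second-index weight of a monomial can be transferred freely among its factors, and an induction on total degree shows that $A$ is spanned over $\Omega_*$ by $1$ together with the monomials $d_{i_r,j}d_{i_1,0}\cdots d_{i_{r-1},0}$ with $i_1\le\cdots\le i_r$ and $j\ge0$. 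The image under $\theta$ of such a monomial is $u_{i_r,j}d_{i_1}\cdots d_{i_{r-1}}$, whose term of highest $d_0$-power is $d_{i_1}\cdots d_{i_r}d_0^{\,j}$; these leading terms are pairwise distinct monomials of $\Omega_*[d_0,d_1,\dots]$, so a triangularity argument filtering by the power of $d_0$ shows that the images are $\Omega_*$-linearly independent, whence $\theta$ is injective. Therefore $\theta$ is an isomorphism and $A\cong B\cong S\cong\Omega^{C_2}_*$.

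The main obstacle is the combinatorics of the last paragraph: because both the defining relations of $A$ and the relation $d_{i,j}=c_{i,j}+ed_{i,j+1}$ in $R$ are inhomogeneous---the constants $c_{i,j}$ generate lower-degree correction terms---the straightening rule and the resulting spanning set cannot simply be read off a leading-term order, and one must organize the reductions carefully by total polynomial degree in the generators. The identification of the maps, the injectivity of $\lambda$, and the reduction to a question about subrings of $\Omega_*[d_0^{\pm1},d_1,d_2,\dots]$ are all formal.
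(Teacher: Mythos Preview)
Your overall architecture is sound and closely parallels the paper's: identify $\Omega^{C_2}_*$ with the $\Omega_*$-subalgebra of $R$ generated by the $d_{i,j}$, then compute the kernel of $\Omega_*[d_{i,j}]\to R$. Your descent-on-$M$ argument for the first step is a clean variant of the paper's normal-form argument $f=f_0(d_{i,j})+ef_1(d_{i,0},e)$, and your direct spanning-plus-triangularity approach to the second step is more self-contained than the paper's appeal to Buchberger's algorithm.

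There is, however, a genuine gap in your injectivity argument. You assert $u_{i,0}=d_i$, but this is false: from $u_{i,0}=d_i-\sum_{\ell<0}c_{i,\ell}d_0^{-\ell}$ and the fact that $c_{i,-i-1}=1$ (the leading coefficient of $y^i$ in $1/F_{MO}(e,y)$ is $e^{-i-1}$), one has $u_{i,0}=d_i+d_0^{\,i+1}+\cdots$. The earlier use of this claim, in identifying $\ker\pi$, survives because what you actually need there is $\pi(u_{i,0})=d_i$, which is correct. But in the last paragraph you use it literally: you write $\theta(d_{i_1,0}\cdots d_{i_{r-1},0}d_{i_r,j})=u_{i_r,j}\,d_{i_1}\cdots d_{i_{r-1}}$ and claim its highest $d_0$-power term is $d_{i_1}\cdots d_{i_r}d_0^{\,j}$. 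In fact each $u_{i,0}$ contributes a top $d_0$-power $d_0^{\,i+1}$ and $u_{i_r,j}$ contributes $d_0^{\,i_r+j+1}$, so the $d_0$-leading term is a pure power of $d_0$, not the monomial you want, and distinct spanning monomials can share the same $d_0$-leading term. The triangularity filtered by powers of $d_0$ therefore does not separate them.

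The fix is to change the filtration. Grade $\Omega_*[d_0,d_1,\dots]$ by total degree in $d_1,d_2,\dots$ (so $\Omega_*[d_0]$ sits in degree $0$). Since $u_{i,j}=d_id_0^{\,j}+(\text{element of }\Omega_*[d_0])$, the top-degree part of $\theta(d_{i_1,0}\cdots d_{i_{r-1},0}d_{i_r,j})$ in this grading is exactly $d_{i_1}\cdots d_{i_r}d_0^{\,j}$, and these \emph{are} pairwise distinct monomials as $(i_1\le\cdots\le i_r,\,j)$ varies. Your triangularity argument then goes through verbatim with this filtration in place of the $d_0$-filtration, and the rest of your proof stands.
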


\begin{proof}
First, we claim that 
\[
\Omega^{C_2}_* \to \Omega_*[e,d_{i,j}]/(d_{i,j} - c_{i,j} - ed_{i,j+1}) = R
\]
identifies $\Omega^{C_2}_*$ with the $\Omega_*$ sub-algebra of $R$ generated by $\{d_{i,j} : i-1,j \geq 0\}$. If $f \in R$, then, using the relations in $R$, we may write 
\[
f = f_0(d_{i,j}) + e f_1(d_{i,0} , e)
\]
for some $f_0 \in \Omega_*[d_{i,j}]$ and $f_1 \in \Omega_*[d_{i,0} , e]$. By considering the highest power of $e = d_0^{-1}$ that may occur in $\phi(f)$, we can deduce that $\phi(f)$ lifts to $\Omega_*[d_0 , d_1, d_2 , \dots]$ if and only if $f_1(d_{i,0} , e) = 0$, i.e., if and only if $f$ may be written as $f = f_0(d_{i,j})$. 

We are left  to compute the kernel of the map 
\[
\Omega_*[d_{i,j}] \to \dfrac{\Omega_*[e,d_{i,j} ]}{d_{i,j} - c_{i,j} - e d_{i,j+1}}.
\]
In $R$,  we may write the element $ed_{i,j + 1} d_{k,\ell + 1} $ as
\begin{align*}
(d_{i,j} - c_{i,j})d_{k,\ell + 1}
\end{align*}
or as
\begin{align*}
d_{i,j+1} (d_{k,\ell } - c_{k,\ell}).
\end{align*}
This gives us the relation
\[
(d_{i,j} - c_{i,j})d_{k,\ell + 1}= d_{i,j+1} (d_{k,\ell } - c_{k,\ell})
\]
for any $i,k \geq 1$ and $j,\ell \geq 0$. We claim that these form a complete set of relations for $\Omega^{C_2}_*$. To see this, we may choose an ordering of the generators $\{d_{i,j},e \}$ such that $e$ is the largest element in the ordering. This induces a partial ordering on the set of monomials in the variables $\{d_{i,j}, e\}$. We may then apply Buchberger's algorithm, which allows us to eliminate all instances of the variable $e$. This is a purely algebraic result, and is an immediate consequence of \cite{Jack}, Lemma 7.1.
\end{proof}

We have calculated $\Omega^{C_2}_*$ in terms of certain generators $d_{i,j} \in \Omega^{C_2}_*$. Since every class in $\Omega^{C_2}_*$ is represented by some $C_2$-manifold, it is natural to ask for $C_2$-manifold representatives of the classes $d_{i,j} \in \Omega^{C_2}_*$. We conclude our calculation of $\Omega^{C_2}_*$ by identifying manifold representatives of the classes $d_{i,j}$. For any $n \geq 0$, we define the {\it twisted projective space} $\mathbb{R}P^n_\sigma$ to be the $C_2$-manifold whose underlying space is $\mathbb{R}P^n$, and whose $C_2$-action is given by
\[
 (x_0 : x_1 : \cdots : x_n) \mapsto  (-x_0, x_1 : \cdots : x_n).
\]
For example, the twisted projective space $\mathbb{R}P^1_\sigma$ is $C_2$-equivariantly homeomorphic to the $C_2$-space $S(1+\sigma)$, which is the unit sphere in the regular representation $\mathbb{R}^{1+\sigma}$ of  $C_2$. This evidently bounds the disc $D(1+\sigma)$, so $[\mathbb{R}P^1_\sigma] = 0$ in $\Omega^{C_2}_1$. It turns that if $n \geq 2$, then the class $[\mathbb{R}P^n_\sigma] \in \Omega^{C_2}_n$ is non-zero, and these classes will form part of our generating set. The rest of the generating set will be obtained from the twisted projective spaces $\mathbb{R}P^n_\sigma$ by (iteratively) applying the Conner-Floyd operation $\Gamma$.
\begin{lemma}
For any $i \geq 1$ and $j \geq 0$, we have
\[ d_{i,j} = [\Gamma^{j} \mathbb{R}P^{i+1}_\sigma] \in \Omega^{C_2}_* \]
\end{lemma}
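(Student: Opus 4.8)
The plan is to use the fact, established above, that the Tate square for $\Omega^{C_2}_*$ is a pullback of rings; in particular $\Omega^{C_2}_*$ embeds into $\Omega^{\Phi C_2}_* \times \Omega^{hC_2}_*$. It therefore suffices to check that the classes $[\Gamma^j \mathbb{R}P^{i+1}_\sigma]$ and $d_{i,j}$ have equal images under the map $\pi_\Phi \colon \Omega^{C_2}_* \to \Omega^{\Phi C_2}_* = \Omega_*[d_0,d_1,\dots]$ to geometric fixed points and under the map $\pi_h \colon \Omega^{C_2}_* \to \Omega^{hC_2}_* = \Omega_*[[e]]$ to homotopy fixed points. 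From the refined pullback diagram (\ref{diagram}) and the lemma describing $R$, the image of the generator $d_{i,j}$ in $\Omega_*[d_0,d_1,\dots]$ is the polynomial $d_i d_0^{\,j} - \sum_{\ell < j} c_{i,\ell}\, d_0^{\,j-\ell}$ (using $e = d_0^{-1}$), and its image in $\Omega_*[[e]]$ is $\sum_{\ell \geq 0} c_{i,j+\ell}\, e^{\ell}$. So the lemma reduces to two geometric computations on the manifolds $\Gamma^j\mathbb{R}P^{i+1}_\sigma$.

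For the geometric fixed points I would start from the identification $\mathbb{R}P^{i+1}_\sigma = \mathbb{P}(\sigma \oplus \mathbb{R}^{i+1})$. Its fixed submanifold is $\mathbb{P}(\mathbb{R}^{i+1}) \sqcup \mathbb{P}(\sigma) = \mathbb{R}P^{i} \sqcup \{\mathrm{pt}\}$: the normal bundle of the $\mathbb{R}P^i$ is $\gamma \otimes \sigma$, with untwisted class $d_i = [\mathbb{R}P^i,\gamma]$, and the normal bundle of the isolated fixed point is $(i+1)\sigma$, with untwisted class $d_0^{\,i+1}$. I then need the behaviour of $\Gamma = (-) \times_{C_2} S(1+\sigma)$ on fixed-point data: the fixed set of $\Gamma N$ is assembled from $N^{C_2}$ together with the two $C_2$-fixed points of $S(1+\sigma) \cong \mathbb{R}P^1_\sigma$, each application contributing an extra normal $\sigma$-summand. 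Combining these gives $\pi_\Phi([\Gamma^j\mathbb{R}P^{i+1}_\sigma])$ explicitly, and matching it against $d_i d_0^{\,j} - \sum_{\ell<j} c_{i,\ell}\, d_0^{\,j-\ell}$ amounts to an identity among the coefficients of $1/F_{MO}(e,y)$; this identity is controlled by the divisibility $(e+y) \mid F_{MO}(e,y)$, which is just the $[2]$-torsion relation $F_{MO}(x,x) = 0$.

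For the homotopy fixed points, which I expect to be the heart of the matter, the formula for $\pi_h$ gives $\pi_h([\Gamma^j\mathbb{R}P^{i+1}_\sigma]) = \sum_{m \geq j} [(\Gamma^m\mathbb{R}P^{i+1}_\sigma)^e]\, e^{m-j}$, so everything comes down to the identity $[(\Gamma^m\mathbb{R}P^{i+1}_\sigma)^e] = c_{i,m}$ in $\Omega_{i+m+1}$ for all $m \geq 0$. Here I would assemble the underlying cobordism classes into the generating function $\sum_{i,m \geq 0} [(\Gamma^m\mathbb{R}P^{i+1}_\sigma)^e]\, e^m y^i$ and identify it with $1/F_{MO}(e,y)$. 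This is where the formal group law genuinely enters: the underlying manifold of $\Gamma$ is built from the sphere bundle of $\underline{\mathbb{R}} \oplus L$ for the tautological sign line bundle $L$, so the iterated construction is governed by tensor products of line bundles over $\mathbb{R}P^\infty$ — exactly the geometry that defines $F_{MO}$ — while the normal bundle relation $\nu|^{\mathbb{R}P^{i+1}}_{\mathbb{R}P^i} = \gamma$ and the Hopf-algebra antipode on $\Omega_*(BO)$ account for the passage from $F_{MO}$ to its multiplicative inverse, just as in the proof of the lemma computing $\phi$.

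The main obstacle is carrying out this last identification: recognizing the iterated balanced products $\mathbb{R}P^{i+1}_\sigma \times_{C_2} S(1+\sigma) \times_{C_2} \cdots \times_{C_2} S(1+\sigma)$ as geometric models for the coefficients of $1/F_{MO}$, and pinning down the characteristic numbers that detect their cobordism classes; this is the unoriented $C_2$-analogue of Hanke's computation in the complex torus-equivariant setting. An alternative that avoids some of this is to establish only the base case $[\mathbb{R}P^{i+1}_\sigma] = d_{i,0}$ directly and then induct on $j$, using that the geometric operation $\Gamma$ carries $d_{i,j}$ to $d_{i,j+1}$, which is precisely the relation $d_{i,j} = c_{i,j} + e\, d_{i,j+1}$ holding in $R$. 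The geometric fixed point check of the previous paragraph is, by contrast, largely bookkeeping once the fixed-set structure of $\mathbb{R}P^{i+1}_\sigma$ and the effect of $\Gamma$ on fixed points are in hand.
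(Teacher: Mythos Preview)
Your strategy is sound but takes an unnecessarily long route. You use only that $\Omega^{C_2}_*$ embeds into the product $\Omega^{\Phi C_2}_* \times \Omega^{hC_2}_*$, which commits you to the homotopy-fixed-point computation you flag as the ``heart of the matter.'' The paper instead uses the stronger fact that the map $\Omega^{C_2}_* \to \Omega^{\Phi C_2}_*$ is \emph{already} injective. This follows from what has been established: $\Omega^{C_2}_*$ is a subring of $R$, the map $R \to \Omega_*[d_0^{\pm 1}, d_1, \dots]$ is localization at the regular element $e$ (hence injective), and by commutativity of the top square in diagram~(\ref{diagram}) the composite $\Omega^{C_2}_* \hookrightarrow R \hookrightarrow \Omega_*[d_0^{\pm 1},d_1,\dots]$ factors through $\Omega^{\Phi C_2}_* = \Omega_*[d_0,d_1,\dots]$. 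With this in hand, only your geometric-fixed-point check is needed, and the identification of $[(\Gamma^m\mathbb{R}P^{i+1}_\sigma)^e]$ with the coefficients of $1/F_{MO}$ becomes entirely unnecessary.

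For the remaining argument the paper does essentially what you outline. In the base case $j=0$ it computes the fixed locus of $\mathbb{R}P^{i+1}_\sigma$ as $\mathbb{R}P^i \sqcup \{*\}$ with normal bundles $\gamma$ and $\mathbb{R}^{i+1}$, giving image $d_i + d_0^{i+1}$ in $\Omega^{\Phi C_2}_*$, and matches this against the known image $d_i - \sum_{\ell<0} c_{i,\ell}\, d_0^{-\ell}$ of $d_{i,0}$. For $j>0$ the paper takes precisely your ``alternative'': rather than tracking how $\Gamma$ acts on fixed-point data and matching against an identity among the $c_{i,\ell}$, it simply invokes that $\Gamma$ carries $d_{i,j}$ to $d_{i,j+1}$ --- the geometric content of the relation $d_{i,j} = c_{i,j} + e\,d_{i,j+1}$ in $R$ together with the description of $\pi_h$ via iterated $\Gamma$. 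So your alternative is not a backup plan but the intended argument, and the main obstacle you worry about never arises.
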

\begin{proof}
Since the map $\Omega^{C_2}_* \to \Omega^{\Phi C_2}_*$ is injective, it suffices to verify the equality in $\Omega^{\Phi C_2}_*$. Recall that the image of $[M] \in \Omega^{C_2}_*$ in $\Omega^{\Phi C_2}_*$ is given by $[M^{C_2},\nu\mid_{M^{C_2}}^M]$.  If $M= \mathbb{R} P^{i+1}_\sigma$, then 
\[
[M^{C_2},\nu\mid_{M^{C_2}}^M] = [\mathbb{R}P^i , \nu \mid_{\mathbb{R}P^i}^{\mathbb{R}P^{i+1}}] + [* , \mathbb{R}^{i+1}] ,
\]
and by definition of the classes $d_i \in \Omega^{\Phi C_2}_*$, this is exactly the element $ d_i - d_0^{i+1}$.
On the other hand, the image of $d_{i,0}$ in $\Omega^{\Phi C_2}_*$ is 
\[
d_i - \sum_{\ell < 0} c_{i,\ell} e^\ell.
\]
But since $d_0 = e^{-1}$, and 
\[
c_{i,\ell} = \begin{cases} -1 & \ell = -i-1\\
0 & 0 > \ell \neq -i-1,\end{cases}
\]
we can deduce that $d_{i,0} = [\mathbb{R}P^{i+1}_\sigma]$. The case $j >0$ follows from the fact that $d_{i,j+1}$ is obtained from $d_{i,j}$ by applying the Conner-Floyd operation $\Gamma$. 
\end{proof}

Combining our previous results, we may state our calculation of $\Omega^{C_2}_*$.

\begin{theorem}\label{zgraded} The $C_2$-equivariant unoriented cobordism ring $\Omega^{C_2}_*$ is generated over $\Omega_*$ by the classes
\[ \begin{tikzcd} 
d_{i,j} = [ \Gamma^{j} \mathbb{R}P^{i+1}_\sigma] &  i \geq 1 \text{ and } j \geq 0.
\end{tikzcd} \]
A complete set of relations is given by 
\[ \begin{tikzcd} 
(d_{i,j} -c_{i,j})d_{k,\ell+1} - d_{i,j+1} (d_{k,\ell} - c_{k,\ell}) & i,k \geq 1 \text{ and } j, \ell \geq 0.
\end{tikzcd} \]
where $c_{i,j} \in \Omega_{i+j+1}$ is the coefficient of $ e^j y^i$ in $\dfrac{1}{F_{MO}(e,y)}$. The restriction
\[
\Omega^{C_2}_* \to \Omega_*
\]
is determined by 
\[
d_{i,j} \mapsto c_{i,j},
\]
and the transfer $\Omega_* \to \Omega^{C_2}_*$ is zero.
\end{theorem}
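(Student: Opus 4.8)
The plan is to assemble Theorem~\ref{zgraded} from the three preceding results, which already do all the substantive work, and then to separately verify the two auxiliary claims about the Mackey functor structure (the restriction and the transfer). The generator-and-relation part is immediate: Theorem~\ref{zgradedpart} gives the presentation
\[
\Omega^{C_2}_* \cong \Omega_*[d_{i,j} : i-1, j \geq 0]\big/\big((d_{i,j}-c_{i,j})d_{k,\ell+1} = d_{i,j+1}(d_{k,\ell}-c_{k,\ell})\big),
\]
and the preceding lemma identifies $d_{i,j}$ with $[\Gamma^j \mathbb{R}P^{i+1}_\sigma]$, so nothing more is needed there. I would open the proof by simply citing both.

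The identification of the restriction map is the main new computation. The composite $\Omega^{C_2}_* \to \Omega^{\Phi C_2}_* \to \Omega^{tC_2}_* \cong \Omega_*((e))$ is $\phi$, and (by the lemma computing $\phi$) $\phi(d_i) = \sum_j c_{i,j}e^j$. But the honest restriction $\mathrm{res}^{C_2}_e \colon \Omega^{C_2}_* \to \Omega_*$ is the underlying-manifold map, which in the Tate square is realized by composing $\Omega^{C_2}_* \to \Omega^{hC_2}_* = \Omega_*[[e]]$ with the augmentation $e \mapsto 0$ (this is the Pontryagin--Thom description of restriction, and agrees with the geometric formula $[M] \mapsto \sum_n [(\Gamma^n M)^e] e^n$ followed by $e\mapsto 0$, since $(\Gamma^0 M)^e = M^e$). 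So I need the image of $d_{i,j}$ under $\Omega^{C_2}_* \to \Omega_*[[e]]$, and then its constant term. Here I would use the pullback description: in $R = \Omega_*[e, d_{i,j}]/(d_{i,j} - c_{i,j} - e d_{i,j+1})$, the lemma exhibits the map $R \to \Omega_*[[e]]$ as $d_{i,j} \mapsto \sum_{\ell \geq 0} c_{i,j+\ell} e^\ell$. Since $\Omega^{C_2}_* \to \Omega_*[[e]]$ factors through $R$ compatibly with $d_{i,j} \mapsto d_{i,j}$, the image of $d_{i,j}$ in $\Omega_*[[e]]$ is $\sum_{\ell \geq 0} c_{i,j+\ell}e^\ell$, whose constant term is $c_{i,j}$. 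Hence $\mathrm{res}^{C_2}_e(d_{i,j}) = c_{i,j}$. This is the step I expect to require the most care, because one must be sure that the map $\Omega^{C_2}_* \to \Omega_*$ appearing here really is the underlying-manifold restriction and not merely the algebraically convenient map $e \mapsto 0$ — but the geometric formula for $\Omega^{C_2}_* \to \Omega^{hC_2}_*$ recorded in Section~2, together with $(\Gamma^0 M)^e = M^e$, makes this identification.

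Finally, the transfer $\mathrm{tr}^{C_2}_e \colon \Omega_* \to \Omega^{C_2}_*$ sends $[N]$ to $[C_2 \times N]$, the free $C_2$-manifold on $N$ with two copies of $N$ permuted. Such a manifold bounds: $C_2 \times N$ is the boundary of $C_2 \times (N \times [0,1))$... more simply, $[C_2 \times N] = [\partial(C_2 \times_{C_2} W)]$ is not quite it; instead I would argue that $C_2 \times N = \partial\big((C_2 \times N) \times [0,1] / \sim\big)$ is wrong too, so the cleanest route is: the $C_2$-manifold $C_2 \times N$ is the boundary of the $C_2$-manifold $C_2 \times (N \times [0,1])$ capped appropriately — or, most efficiently, observe that $C_2 \times N$ is $C_2$-equivariantly the boundary of $D(\sigma) \times N$ with the free $C_2$-action, where $D(\sigma)$ is the interval $[-1,1]$ with the sign action, since $\partial D(\sigma) = S(\sigma) = C_2$. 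Thus $[C_2 \times N] = [\partial(D(\sigma)\times N)] = 0$ in $\Omega^{C_2}_*$, so the transfer is zero. I would present these last two points as short paragraphs after invoking Theorems~\ref{zgradedpart} and the generator lemma for the bulk of the statement.
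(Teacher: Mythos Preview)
Your proposal is correct and matches the paper's approach exactly: the paper states Theorem~\ref{zgraded} immediately after Lemma~3.3 with the phrase ``Combining our previous results, we may state our calculation of $\Omega^{C_2}_*$'' and gives no further argument, so the generator--relation portion is, as you say, just Theorem~\ref{zgradedpart} plus the manifold-representative lemma. Your verification of the restriction via the map $d_{i,j} \mapsto \sum_{\ell\ge 0} c_{i,j+\ell}e^\ell$ in $\Omega_*[[e]]$ followed by $e\mapsto 0$, and of the transfer via $C_2\times N = \partial(D(\sigma)\times N)$, supplies details the paper leaves entirely implicit; both are correct, and the transfer argument you eventually land on ($\partial D(\sigma)=S(\sigma)\cong C_2$) is the clean one---you can safely delete the false starts preceding it.
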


\section{The extended cobordism ring $\Omega^{C_2}_\diamond$}

We have seen two $C_2$-equivariant analogues of cobordism, namely the geometric cobordism ring $\Omega^{C_2}_*$, and the stable cobordism ring $MO^{C_2}_*$. It turns out that the structure of each of these rings is subsumed by the structure of the {\it extended} cobordism ring $\Omega^{C_2}_\diamond$, which is the subject of the present section. This ring can be viewed from two different perspectives, and we pause to discuss each of these. 

On one hand, the ring $\Omega^{C_2}_\diamond$ has a geometric interpretation. For any $k \geq 0$, let $D(k\sigma)$ denote the unit disc in the orthogonal $C_2$-representation $\mathbb{R}^{k\sigma}$, whose boundary $\partial D(k\sigma) = S(k\sigma)$ is 
 the unit sphere in $\mathbb{R}^{k\sigma}$. Then $\Omega^{C_2}_*(D(k\sigma),S(k\sigma))$ is the cobordism group of $C_2$-manifolds $(M,\partial M)$ equipped with a map to $(D(k\sigma),S(k\sigma))$. This is the standard geometric intepretation of the cobordism homology of a pair of spaces. From this perspective, the extended cobordism ring $\Omega^{C_2}_\diamond$ is the direct sum
 \begin{align*}\Omega^{C_2}_\diamond = \bigoplus_{k\geq 0} \Omega^{C_2}_{*}(D(k\sigma),S(k\sigma)).
 \end{align*}
with product structure specified by
\[
[M \to D(k\sigma)][M' \to D(k'\sigma)] = [M \times M' \to D((k+k')\sigma)].
\]
Here, we have identified the $C_2$-spaces $D(k\sigma) \times D(k'\sigma) \cong D((k+k')\sigma)$.

On the other hand, the ring $\Omega^{C_2}_\diamond$ has a homotopical interpretation: it is a portion of the $RO(C_2)$-graded coefficient ring $\Omega^{C_2}_\star$ of $\Omega_{C_2}$. To see this, we note that if $A \subset X$ is an inclusion of (unbased) $C_2$-spaces, then the Pontrjagin-Thom construction extends to an identification 
\[
\Omega^{C_2}_*(X,A) \cong \pi_*^{C_2}(\Omega_{C_2} \wedge X/A).
\]
If $(X,A)$ is the pair $(D(k\sigma),S(k\sigma))$, then $D(k\sigma)/S(k\sigma) \cong S^{k\sigma}$, the representation sphere associated to the $C_2$-representation $k\sigma$, and the preceding isomorphism takes the form
\begin{align*}
\Omega^{C_2}_*(D(k\sigma),S(k\sigma))&  \cong \pi_*^{C_2}(\Omega_{C_2} \wedge S^{k\sigma})\\
& \cong \Omega^{C_2}_{*-k\sigma}.
\end{align*}
This prespective allows us to view
\[
\Omega^{C_2}_\diamond = \bigoplus_{k = 0}^{\infty}  \Omega^{C_2}_{* - k \sigma},
\]
as a subring of the $RO(C_2)$-graded coefficients of $\Omega_{C_2}$. This perspective is useful because, for instance, it affords us access to long exact sequences in $RO(C_2)$-graded homotopy groups induced by cofiber sequences of $C_2$-spectra. We refer the interested reader to \cite{LMS} for further information about $RO(C_2)$-graded homotopy groups, and to chapter XV of \cite{alaska} for further details regarding the equivariant Pontrjagin-Thom construction in the relative case.

There are two especially important examples of relative $C_2$-manifolds. The first is the {\it shifted Euler class}
\[
a = [* \to D(\sigma)] \in \Omega^{C_2}_{-\sigma},
\]
which is represented by the inclusion of the origin in the unit disc $D(\sigma)$. The second is the {\it orientation class}
\[
u = [D(\sigma) \to D(\sigma)] \in \Omega^{C_2}_{1-\sigma},
\]
which represented by the identity map on $D(\sigma)$. It turns out that the extended cobordism ring $\Omega^{C_2}_\diamond$ is generated over $\Omega^{C_2}_*$ by the classes $a$ and $u$. In the following lemma, we identify the relationship between geometric classes, the shifted Euler class, and the orientation class.

\begin{lemma}
If $M$ is a $C_2$-manifold, then the equation
\[
u[M] = u[M^e] + a[\Gamma M]
\]
holds in $\Omega^{C_2}_{*-\sigma}$.
\end{lemma}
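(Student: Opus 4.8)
The plan is to read the asserted identity as an equation in $\Omega^{C_2}_{*-\sigma}\cong\Omega^{C_2}_*(D(\sigma),S(\sigma))$ and to detect it through the long exact sequence of the pair $(D(\sigma),S(\sigma))$. Since $S(\sigma)\cong C_2$ and $D(\sigma)/S(\sigma)\cong S^\sigma$, this sequence is obtained by smashing $\Omega_{C_2}$ with the cofibre sequence $(C_2)_+\to S^0\to S^\sigma$ (whose second map is the Euler class $a$) and taking $\pi^{C_2}_*$. Identifying $\pi^{C_2}_*(\Omega_{C_2}\wedge(C_2)_+)$ with $\Omega_*$, the first map is the transfer $\Omega_*\to\Omega^{C_2}_*$, which vanishes by Theorem \ref{zgraded}; hence for every $n$ there is a short exact sequence
\[
0\longrightarrow\Omega^{C_2}_n\longrightarrow\Omega^{C_2}_{n-\sigma}\longrightarrow\Omega_{n-1}\longrightarrow 0,
\]
whose first map is multiplication by $a$ and whose second map $\partial$ sends $[(W,\partial W)\to(D(\sigma),S(\sigma))]$ to the class in $\Omega^{C_2}_{n-1}(S(\sigma))\cong\Omega_{n-1}$ of the fibre of $\partial W\to S(\sigma)\cong C_2$. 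Using in addition that $\Phi^{C_2}$ is multiplicative on $\Omega^{C_2}_\diamond$ with $\Phi^{C_2}(a)=1$, and that $\Omega^{C_2}_*\to\Omega^{\Phi C_2}_*$ is injective, one sees that $(\partial,\Phi^{C_2})\colon\Omega^{C_2}_{n-\sigma}\to\Omega_{n-1}\oplus\Omega^{\Phi C_2}_*$ is injective: if $\partial x=0$ then $x=az$ for a unique $z$, and $\Phi^{C_2}(x)=\Phi^{C_2}(a)\Phi^{C_2}(z)=\Phi^{C_2}(z)$, which is zero only if $z=0$. So it suffices to verify the identity after applying $\partial$ and after applying geometric fixed points.

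For the boundary map: both $u[M]$ and $u[M^e]$ are represented by the projection $M\times D(\sigma)\to D(\sigma)$, with the diagonal action and with the action on the $D(\sigma)$ factor only, respectively; in either case the boundary is $M\times S(\sigma)$, whose fibre over a point of $S(\sigma)$ is $M^e$. And $a[\Gamma M]$ is represented by the closed manifold $\Gamma M$ mapped constantly to $0\in D(\sigma)$, so it has empty boundary. Hence $\partial(u[M])=\partial(u[M^e])=[M^e]$ and $\partial(a[\Gamma M])=0$, so both sides of the equation have image $[M^e]$ under $\partial$ (recall $2=0$ here).

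For geometric fixed points: $\Phi^{C_2}$ sends $[N]\in\Omega^{C_2}_*$ to $[N^{C_2},\nu\mid_{N^{C_2}}^{N}]$, and $\Phi^{C_2}(u)=d_0$ since $D(\sigma)^{C_2}$ is a point whose normal line in $D(\sigma)$ represents $d_0=[\mathbb{R}P^0,\gamma]$. Thus $\Phi^{C_2}(u[M])=d_0\cdot[M^{C_2},\nu\mid_{M^{C_2}}^{M}]$ and $\Phi^{C_2}(u[M^e])=d_0\cdot[M^e]$, and the remaining task is to compute $\Phi^{C_2}(a[\Gamma M])=[(\Gamma M)^{C_2},\nu\mid_{(\Gamma M)^{C_2}}^{\Gamma M}]$. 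Unwinding the Conner-Floyd operation, the fixed locus of the residual $C_2$-action on $\Gamma M=M\times_{C_2}S(1+\sigma)$ consists of a copy of the underlying manifold $M^e$, with trivial normal line, together with a copy of $M^{C_2}$, with normal bundle $\nu\mid_{M^{C_2}}^{M}\oplus\mathbb{R}$; therefore $\Phi^{C_2}(a[\Gamma M])=d_0[M^e]+d_0\cdot[M^{C_2},\nu\mid_{M^{C_2}}^{M}]$. Adding the contributions of $u[M^e]$ and $a[\Gamma M]$ and cancelling mod $2$ leaves exactly $d_0\cdot[M^{C_2},\nu\mid_{M^{C_2}}^{M}]=\Phi^{C_2}(u[M])$, completing the verification.

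The step I expect to be the real obstacle is this last computation of $(\Gamma M)^{C_2}$ and its normal bundle: it is a concrete local analysis of the balanced product $M\times_{C_2}S(1+\sigma)$ near the fixed set of $M$, requiring one to keep careful track of how the two commuting $C_2$-actions on $S(1+\sigma)$ interact. Everything else is formal manipulation with the exact sequence, the vanishing of the transfer, and the elementary behaviour of $u$ and $a$ under $\partial$ and $\Phi^{C_2}$.
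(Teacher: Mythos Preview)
Your proof is correct and follows essentially the same route as the paper: both verify the identity by passing to geometric fixed points and computing $(\Gamma M)^{C_2}$ with its normal bundle, obtaining $[M^{C_2},\nu\mid_{M^{C_2}}^M\oplus\mathbb{R}]+[M^e,\mathbb{R}]$ for $a[\Gamma M]$. The paper is marginally more direct in that it simply asserts $\Phi^{C_2}\colon\Omega^{C_2}_{*-\sigma}\to\Omega^{\Phi C_2}_*$ is injective, so your additional $\partial$-check via the short exact sequence is not strictly needed, but your argument with the pair $(\partial,\Phi^{C_2})$ is a valid way to make that injectivity explicit.
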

\begin{proof}
Recall that elements of $\Omega^{C_2}_{*-\sigma}$ are represented by $C_2$-manifolds $(X,\partial X)$ equipped with a map to $(D(\sigma),S(\sigma))$. The classes $u[M]$, $u[M^e]$, and $a[\Gamma M]$ are represented geometrically as follows:
\begin{align*}
u[M] & = [M \times D(\sigma) \to D(\sigma)]\\
u[M^e] & = [M^e \times D(\sigma) \to D(\sigma)] \\
a[\Gamma M] & = [\Gamma M \to *  \to D(\sigma)].
\end{align*} 
Since the map 
\[ \begin{tikzcd} \Omega^{C_2}_{*-\sigma} \ar[r] & \Omega^{\Phi C_2}_{*-\sigma} \ar[r,swap,"\cong"] \ar[r,"a^{-1}"] &  \Omega^{\Phi C_2}_*\end{tikzcd} \]
is injective, it suffices to prove the relation in the fixed point bordism ring $\Omega^{\Phi C_2}_*$. The image of the classes $u[M]$ and $u[M^e]$ in $\Omega^{\Phi C_2}_*$ are given by $[M^{C_2}, \nu \mid_{M^{C_2}}^M \oplus \mathbb{R}]$ and $[M^e, \mathbb{R}] $. The image of the class $a[\Gamma M]$ in $\Omega^{\Phi C_2}_*$ is
\[
[M^{C_2} \sqcup M^e , \nu \mid_{M^{C_2}}^M \oplus \mathbb{R} \sqcup  \mathbb{R}] = [M^{C_2} , \nu \mid_{M^{C_2}}^M \oplus \mathbb{R}] + [ M^e , \mathbb{R}] .
\]
\end{proof}

It turns out that this relation completely determines the structure of the extended bordism ring $\Omega^{C_2}_\diamond$ as an algebra over $\Omega^{C_2}_*$.
\begin{theorem} As an algebra over the $C_2$-cobordism ring $\Omega^{C_2}_*$, the extended cobordism ring $\Omega^{C_2}_\diamond$ is generated by the classes $a \in \Omega^{C_2}_{-\sigma}$ and $u \in \Omega^{C_2}_{1-\sigma}$. A complete set of relations is given by 
\[
u[M] = u[M^e] + a[\Gamma M]
\]
for each $[M] \in \Omega^{C_2}_*$.
\end{theorem}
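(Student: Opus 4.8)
The plan is to show that the natural map of graded $\Omega^{C_2}_*$-algebras
\[
f\colon \Omega^{C_2}_*[a,u]\big/(\mathrm{rel})\longrightarrow \Omega^{C_2}_\diamond,\qquad a\mapsto a,\quad u\mapsto u,
\]
is an isomorphism, where $(\mathrm{rel})$ is the ideal generated by the elements $u[M]-u[M^e]-a[\Gamma M]$ for $[M]\in\Omega^{C_2}_*$. The preceding lemma shows these elements vanish in $\Omega^{C_2}_\diamond$, so $f$ is a well-defined ring map, and the generation statement is exactly the surjectivity of $f$. Write $B$ for the source, and write $\iota\colon\Omega_*\to\Omega^{C_2}_*$ for the unit map (trivial $C_2$-action), which splits $\mathrm{res}\colon\Omega^{C_2}_*\to\Omega_*$. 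Grade $B$ and $\Omega^{C_2}_\diamond$ by integer degree and by multiples of $\sigma$: only the $\sigma$-degrees $0,-\sigma,-2\sigma,\dots$ occur on each, the $\sigma$-degree-$0$ part of $B$ is exactly $\Omega^{C_2}_*$ (the relations have $\sigma$-degree $-1$), and multiplication by $a$ strictly decreases the $\sigma$-degree. The argument has two halves: a topological computation of $\Omega^{C_2}_\diamond$ modulo $a$, and an algebraic computation of $B$ modulo $a$, glued by a bounded-below induction along the $\sigma$-grading.

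The essential input is the Gysin long exact sequence for the Euler class $a$. Smashing the $C_2$-cofiber sequence $C_{2+}\to S^0\xrightarrow{\,a\,}S^{\sigma}$ with $\Omega_{C_2}\wedge S^{k\sigma}$ and applying $\pi^{C_2}_*$, together with the Wirthmüller identification $\pi^{C_2}_*(\Omega_{C_2}\wedge C_{2+}\wedge S^{k\sigma})\cong\Omega_{*-k}$, produces for each $k\geq 0$ an exact sequence
\[
\cdots\to \Omega_{*-k}\xrightarrow{\ \mathrm{tr}\ }\Omega^{C_2}_{*-k\sigma}\xrightarrow{\ \cdot a\ }\Omega^{C_2}_{*-(k+1)\sigma}\xrightarrow{\ \mathrm{res}\ }\Omega_{*-1-k}\to\cdots
\]
whose connecting homomorphism is the geometric restriction $[M]\mapsto[M^e]$ (this is the standard identification of the boundary map of $C_{2+}\to S^0\to S^\sigma$, see \cite{LMS}). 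Since $u\in\Omega^{C_2}_{1-\sigma}$ is the relative fundamental class of $(D(\sigma),S(\sigma))$, its underlying class is a unit: $\mathrm{res}(u)=1$. Hence $z\mapsto\iota(z)\,u^{k+1}$ is a section of $\mathrm{res}$, so $\mathrm{res}$ is onto in every such sequence, the preceding transfer is zero, $a$ is injective, and the sequence breaks into short exact sequences $0\to\Omega^{C_2}_{*-k\sigma}\xrightarrow{\cdot a}\Omega^{C_2}_{*-(k+1)\sigma}\xrightarrow{\mathrm{res}}\Omega_{*-1-k}\to 0$. Two conclusions follow: $a$ is a non-zero-divisor on $\Omega^{C_2}_\diamond$, and, since $\mathrm{res}$ is split by $\iota$, one gets $\Omega^{C_2}_\diamond/(a)\cong\Omega^{C_2}_*\oplus\bigoplus_{k\geq 1}u^{k}\iota(\Omega_*)$.

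Next I would reduce $B$ modulo $a$. Setting $a=0$ in the relations sends $u[M]-u[M^e]-a[\Gamma M]$ to $u\bigl([M]-\iota\,\mathrm{res}[M]\bigr)$, and $\{[M]-\iota\,\mathrm{res}[M]:[M]\in\Omega^{C_2}_*\}$ spans $J:=\ker(\mathrm{res}\colon\Omega^{C_2}_*\to\Omega_*)$; therefore
\[
B/(a)\;\cong\;\Omega^{C_2}_*[u]\big/(uJ)\;\cong\;\Omega^{C_2}_*\oplus\bigoplus_{j\geq 1}(\Omega^{C_2}_*/J)\,u^{j}\;\cong\;\Omega^{C_2}_*\oplus\bigoplus_{j\geq 1}\Omega_*\,u^{j}.
\]
One then checks that $f$ matches this with the computation of $\Omega^{C_2}_\diamond/(a)$ above, summand by summand: on $\Omega_*\,u^{j}$ the induced map $\Omega_*\,u^{j}\to\Omega^{C_2}_{*-j\sigma}\big/a\,\Omega^{C_2}_{*-(j-1)\sigma}$, composed with the isomorphism of the target with $\Omega_{*-j}$ given by $\mathrm{res}$, is the identity. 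Thus $\bar f\colon B/(a)\xrightarrow{\ \sim\ }\Omega^{C_2}_\diamond/(a)$.

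It remains to upgrade this to the statement that $f$ is an isomorphism, by a bounded-below induction along the $\sigma$-grading. For injectivity, suppose $b\neq 0$ with $f(b)=0$ is chosen of maximal $\sigma$-degree; it cannot have $\sigma$-degree $0$, where $f$ is the identity of $\Omega^{C_2}_*$, so injectivity of $\bar f$ gives $b=ab'$ with $b'$ of strictly larger $\sigma$-degree and $b'\neq 0$; then $a\,f(b')=0$ forces $f(b')=0$ since $a$ is a non-zero-divisor on $\Omega^{C_2}_\diamond$, contradicting maximality. Surjectivity is dual: lift a given class through $\bar f$, subtract, land in $a\,\Omega^{C_2}_\diamond$, and recurse; the $\sigma$-degree rises by one toward $0$ at each step, so this terminates. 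I expect the crux to be the second paragraph: producing the Gysin sequence, pinning down its connecting map as the geometric restriction $[M]\mapsto[M^e]$, and checking $\mathrm{res}(u)=1$, so that the sequence splits and simultaneously yields $a$-regularity and the mod-$a$ description of $\Omega^{C_2}_\diamond$. Everything afterward is formal commutative algebra; the only other delicate point is the summand-by-summand comparison of the two reductions modulo $a$, which is precisely what licenses the induction.
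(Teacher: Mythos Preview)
Your proposal is correct and uses essentially the same approach as the paper: both arguments rest on the Gysin sequence coming from the cofiber sequence $C_{2+}\to S^0\to S^\sigma$, the observation that $\mathrm{res}(u)=1$ so that powers of $u$ split the restriction, and an induction along the $\sigma$-grading. The only difference is packaging: the paper builds $\Omega^{C_2}_{*-n\sigma}$ inductively as an $\Omega_*$-module level by level, whereas you organize the same content as ``$a$ is regular on both sides and the map is an isomorphism mod $a$,'' then run a d\'evissage; this is a tidy reformulation but not a genuinely different route.
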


\begin{proof}
It suffices to prove the $\Omega_*$-module isomorphism 
\begin{align*}
\Omega^{C_2}_{*-n\sigma} & \cong \dfrac{\Omega^{C_2}_* \{ a^n , a^{n-1} u , \dots , au^{n-1}, u^n \} }{a^{i} u^{n-i} [M] = a^{i}u^{n-i} [M^e] + a^{i+1} u^{n-i-1} [\Gamma M] } \hspace{0.5in}  1 \leq i \leq n 
\end{align*}
for every $n \geq 0$. We prove the claim by induction on $n \geq 0$, with the case $n = 0$ holding trivially. Suppose that we have proved the claim for some $n \geq 0$. We apply $\Omega^{C_2}_{*-n\sigma}(-)$ to the cofiber sequence 
\[
C_{2+} \to S^0 \to S^\sigma
\]
to obtain the exact sequence 

\[ \begin{tikzcd} 
\cdots \ar[r] & \Omega_{*+1-(n+1)\sigma} \ar[r,"\text{tr}_e^{C_2}"] & \Omega^{C_2}_{*-n\sigma} \ar[r,"a"] &   \Omega^{C_2}_{*-(n+1)\sigma} \ar[r,"\text{res}^{C_2}_e"] &  \Omega_{*-(n+1)\sigma} \ar[r] &  \cdots.
\end{tikzcd}  \] 
By induction we may assume that the map $\text{tr}_e^{C_2}$ above  is zero, and the map $\text{res}^{C_2}_e$ above is surjective, which implies that our exact sequence is short:

\[ \begin{tikzcd} 
0 \ar[r] & \Omega^{C_2}_{*-n\sigma} \ar[r,"a"] &   \Omega^{C_2}_{*-(n+1)\sigma} \ar[r] &  \Omega_{*-(n+1)\sigma} \ar[r] &  0.
\end{tikzcd}  \] 
The righthand term $\Omega_{*-(n+1)\sigma}$ is a free $\Omega_*$-module, and the short exact sequence above is split by $u^{n+1}$, which leads to the isomorphism
\begin{align*}
\Omega^{C_2}_{*-(n+1)\sigma} & \cong  \Omega_*\{u^{n+1}\} \oplus \Omega^{C_2}_{*-n\sigma}\{a\}  \\
& \cong \dfrac{\Omega^{C_2}_* \{ a^{n+1} , a^{n} u , \dots , au^{n}, u^{n+1} \} }{a^{i} u^{n+1-i} [M] = a^{i}u^{n+1-i} [M^e] + a^{i+1} u^{n-i} [\Gamma M] } \hspace{0.5in}  1 \leq i \leq n+1
\end{align*}
Taking the direct sum over $n \geq 0$ yields the desired result.
\end{proof}

Combining the previous result with Theorem \ref{zgraded}, we obtain the following description of  $\Omega^{C_2}_\diamond$.

\begin{theorem}\label{goodrange}
The extended $C_2$-bordism ring $\Omega^{C_2}_\diamond$ is generated over $\Omega_*$ by the classes
\[ \begin{tikzcd} 
d_{i,j} = [ \Gamma^{j} \mathbb{R}P^{i+1}_\sigma] &  i \geq 1 \text{ and } j \geq 0,
\end{tikzcd} \]
together with the shifted Euler class
\[
a = [ * \to D(\sigma)]
\]
and the orientation class
\[
u = [D(\sigma) \to D(\sigma)].
\]
A complete set of relations is given by 
\[ \begin{tikzcd} 
(d_{i,j} -c_{i,j})d_{k,\ell+1} - d_{i,j+1} (d_{k,\ell} - c_{k,\ell}) & i,k \geq 1 \text{ and } j, \ell \geq 0,\\
u(d_{i,j} - c_{i,j}) - a d_{i,j+1} & i \geq 1 \text{ and } j \geq 0,
\end{tikzcd} \]
where $c_{i,j} \in \Omega_{i+j+1}$ is the coefficient of $ e^j y^i$ in $\dfrac{1}{F_{MO}(e,y)}$. The restriction
\[
\Omega^{C_2}_\diamond \to \Omega_\diamond = \Omega_*[u]
\]
is determined by 
\begin{align*}
d_{i,j} & \mapsto c_{i,j}\\
a & \mapsto 0,\\
u & \mapsto u.
\end{align*}
and the transfer $\Omega_\diamond \to \Omega^{C_2}_\diamond$ is zero.

\end{theorem}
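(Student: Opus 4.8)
The plan is to assemble Theorem \ref{goodrange} by combining the generators-and-relations presentation of $\Omega^{C_2}_*$ from Theorem \ref{zgraded} with the presentation of $\Omega^{C_2}_\diamond$ as an algebra over $\Omega^{C_2}_*$ from the preceding theorem. Concretely, the previous theorem tells us $\Omega^{C_2}_\diamond$ is generated over $\Omega^{C_2}_*$ by $a$ and $u$ subject only to the relations $u[M] = u[M^e] + a[\Gamma M]$ for $[M] \in \Omega^{C_2}_*$, and Theorem \ref{zgraded} tells us $\Omega^{C_2}_*$ is $\Omega_*[d_{i,j} : i \geq 1, j \geq 0]$ modulo the ideal generated by the quadratic relations $(d_{i,j} - c_{i,j})d_{k,\ell+1} - d_{i,j+1}(d_{k,\ell} - c_{k,\ell})$. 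So the first step is simply to substitute the latter presentation into the former, obtaining $\Omega^{C_2}_\diamond$ as $\Omega_*[d_{i,j}, a, u]$ modulo the quadratic relations together with the full family $u[M] = u[M^e] + a[\Gamma M]$.

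The key reduction is then to show that the infinite family of relations $u[M] = u[M^e] + a[\Gamma M]$, indexed by all $[M] \in \Omega^{C_2}_*$, is generated as an ideal by the single sub-family indexed by $M = \Gamma^j \mathbb{R}P^{i+1}_\sigma$, i.e. by the relations $u(d_{i,j} - c_{i,j}) - a d_{i,j+1}$. This is where most of the work lies. The point is that the assignment $[M] \mapsto u[M] - u[M^e] - a[\Gamma M]$ is, modulo the ideal already generated, "$\Omega_*$-linear and multiplicative enough": both $(-)^e$ (restriction to the underlying manifold) and $\Gamma$ are additive, $\Gamma$ is a derivation-like operation with respect to products in the appropriate sense ($\Gamma(M \times N)$ relates to $\Gamma M \times N^e$ and $M^e \times \Gamma N$), and $u, a$ are central. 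Thus if the relation holds for a generating set of $\Omega^{C_2}_*$ as an $\Omega_*$-algebra, it holds for all of $\Omega^{C_2}_*$. Since $d_{i,j} = [\Gamma^j \mathbb{R}P^{i+1}_\sigma]$ generate $\Omega^{C_2}_*$ over $\Omega_*$ by Theorem \ref{zgraded}, and the relation is trivially true for classes in the image of $\Omega_* \to \Omega^{C_2}_*$ (for which $M = M^e$ and $\Gamma M$ is a free manifold, but more to the point the restriction is an iso onto $\Omega_*$ so one checks directly), we get that the relations $u(d_{i,j} - c_{i,j}) - ad_{i,j+1}$ suffice. One must be slightly careful to also note $u[\text{pt}] = u$, $a[\text{pt}] = a$ are already built in, and that $c_{i,j} = [\,(\Gamma^j\mathbb{R}P^{i+1}_\sigma)^e\,]$ by the computation in the lemma identifying $d_{i,j}$.

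Finally I would record the restriction and transfer maps. The restriction $\Omega^{C_2}_\diamond \to \Omega_\diamond = \Omega_*[u]$ is read off from the restriction on $\Omega^{C_2}_*$ (which sends $d_{i,j} \mapsto c_{i,j}$ by Theorem \ref{zgraded}), the fact that $\mathrm{res}(a) = [\,\text{pt} \to D(\sigma)^e = D(\mathbb{R})\,] = 0$ since the inclusion of a point into an interval is null-bordant rel boundary, and $\mathrm{res}(u) = u$ since $u$ restricts to the identity class of $D(\mathbb{R})$; one notes $\Omega_\diamond = \bigoplus_k \Omega_*(D(\mathbb{R}^k), S(\mathbb{R}^k)) = \Omega_*[u]$ because non-equivariantly $D(k\sigma)/S(k\sigma) = S^k$ and the suspension isomorphism makes this polynomial on the class $u$ of the identity. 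The transfer $\Omega_\diamond \to \Omega^{C_2}_\diamond$ is zero: it is determined degreewise by the transfers $\Omega_{*-k} \to \Omega^{C_2}_{*-k\sigma}$, and by Theorem \ref{zgraded} the transfer $\Omega_* \to \Omega^{C_2}_*$ (the $k=0$ case) is zero; the general case follows since the transfer is a map of $\Omega^{C_2}_\diamond$-modules (Frobenius reciprocity) and $u, a$ generate, so $\mathrm{tr}(u^j x) = u^j \,\mathrm{tr}(x)$ reduces everything to the $k=0$ case, or more directly one uses the long exact sequences from the cofiber sequence $C_{2+} \to S^0 \to S^\sigma$ appearing in the previous proof, where the transfers were already shown to vanish by induction.

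The main obstacle is the middle step: verifying that the full family of relations $u[M] = u[M^e] + a[\Gamma M]$ collapses to the $d_{i,j}$-indexed sub-family. This requires knowing precisely how $(-)^e$ and $\Gamma$ interact with products so that the "correction term" $u[M] - u[M^e] - a[\Gamma M]$ behaves linearly and multiplicatively modulo the quadratic ideal — in particular one needs the product formula for $\Gamma(M \times N)$ and the obvious $(M\times N)^e = M^e \times N^e$, then an induction on monomial degree in the $d_{i,j}$. Everything else is bookkeeping with presentations and the already-established exact sequences.
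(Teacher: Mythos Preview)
Your proposal is correct and matches the paper's approach: the paper literally says ``Combining the previous result with Theorem \ref{zgraded}'' and gives no further argument, so you have supplied the details the paper omits. You have correctly identified that the only nontrivial step in this combination is showing that the infinite family of relations $u[M]=u[M^e]+a[\Gamma M]$ is generated by the sub-family indexed by the $d_{i,j}$, and your outline via the multiplicative behaviour of $\Gamma$ is viable.

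Two remarks on that key step. First, the precise product rule you need is the \emph{twisted} Leibniz rule
\[
\Gamma(xy)=\mathrm{res}(x)\,\Gamma(y)+\Gamma(x)\,y,
\]
in which one factor keeps its full equivariant class rather than its restriction; the symmetric version you hint at (``$\Gamma M\times N^e$ and $M^e\times\Gamma N$'') is not quite right. This twisted rule follows at once from the identity $ux=u\,\mathrm{res}(x)+a\,\Gamma(x)$ in $\Omega^{C_2}_\diamond$ together with the injectivity of multiplication by $a$ established in the inductive proof of the preceding theorem. Second, one can bypass the Leibniz rule entirely, and this is arguably what the paper's setup invites: the proof of the preceding theorem yields the $\Omega_*$-module splitting $\Omega^{C_2}_{*-n\sigma}\cong\Omega_*\{u^n,au^{n-1},\dots,a^{n-1}u\}\oplus\Omega^{C_2}_*\{a^n\}$, while in your candidate ring $S=\Omega_*[d_{i,j},a,u]/I$ the relations $u(d_{i,j}-c_{i,j})=a\,d_{i,j+1}$ let you rewrite any monomial $a^pu^q\prod d_{i_s,j_s}$, by induction on $(q,\#d)$ lexicographically, as an $\Omega_*$-combination of monomials $a^pu^q$ with $q\ge1$ together with elements $a^p\cdot(\text{monomial in the }d_{i,j})$. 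Comparing with the known splitting then shows the surjection $S\to\Omega^{C_2}_\diamond$ is an isomorphism, with no need to analyse $\Gamma$ on products. Your treatment of the restriction and transfer is correct as written.
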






\bibliographystyle{amsalpha}

\end{document}